\newtheorem{thm}{Theorem}[section]
\newtheorem{cor}[thm]{Corollary}
\newtheorem{defn}[thm]{Definition}
\newtheorem{prop}[thm]{Proposition}
\newtheorem{rem}[thm]{Remark}
\title{Slice regular functions of  several octonionic variables.}
\author{Guangbin Ren, Ting  Yang}
\email[G.~Ren]{rengb@ustc.edu.cn}
\email[T.~Yang]{tingy@mail.ustc.edu.cn}
\date{\today}
\thanks{The first author is supported by the NNSF  of China (11771412).}
\keywords{Slice regular functions, Quaternions, Octonions, Bochner-Martinelli formula, Hartogs phenomena.}
\subjclass[2010]{Primary 32A07; Secondary 32A26, 30G35.}
\begin{document}
\maketitle
\markboth{Slice regular functions of  several octonionic variables}
{Slice regular functions of  several octonionic variables}
\renewcommand{\sectionmark}[1]{}

\begin{abstract} Octonionic analysis is becoming eminent due to   the role of octonions in the theory of $G_2$ manifold.
In this article, a new slice theory is introduced as a generalization of the holomorphic theory of  several complex variables
to the noncommutative or nonassociative realm.
The  Bochner-Martinelli formula  is established for slice functions  of  several octonionic variables as well as several quaternionic variables.
In this setting, we find    the Hartogs phenomena for slice regular functions.
\end{abstract}

\section{Introduction}
Recently,  octonionic analysis has been put in the spotlight as the development of the theory of $G_2$ manifold since $G_2$ is the automorphism  group of the octonion algebra (see \cite{Sergey2017002, Wang2014001}). In this article, we try to establish a new theory related to  the octonionic analysis. That is the slice  octonionic analysis which generalize the lower dimensional theory to higher dimensional.
It reminds us that the slice technique may be helpful in the study of the big problem of  Ising model in higher dimensions.
Notice that up to now  we only know that  $2D$ Ising model is exactly solvable or integrable. For Ising model and the related discrete analysis, we refer to \cite{Hongler2013002, Zhu2017003,Zhu2018001}.

Now we move on to recall the  classical theory on slice  analysis. The theory of slice regular functions of one quaternionc variable, initiated by  Gentili and Struppa \cite{Gentili2006001,Gentili2007001},  provides an effective approach to generalize the beautiful theory of holomorphic functions of one complex variable to the   non-commutative or even non-associative realm. It turns out to have    potential applications in the quantum theory since it demonstrates that the self-adjoint operators of quaternions  admits  {\it real} $S$-spectrum \cite{Ghiloni2013001}.
Colombo, Sabadini, and Struppa later extend the theory   to the Clifford algebras\cite{Colombo2009003,Gentili2009004}, and Gentilli and  Struppa  \cite{Gentili2010002}  to the octonions.
A further extension to  real alternative  algebras was introduced by Ghiloni and Petrotti in \cite{Ghiloni2011001}, and  later in \cite{Ghiloni2017001}.

The root of this theory lies in its effective approach to construct new functions from the stem function which need not be holomorphic (see \cite{Ghiloni2011002}). This approach goes back to
the well-known Futer contruction (see \cite{Sudbery1979001, Sommen2000001,Klaus2008005}).

Now we recall this construction in more detail (see \cite{Ren2016001}).
We consider the holomorphic function $F$, defined in a domain  $D$ of the complex plan $\mathbb{C}$  invariant under
the complex conjugate, with values in the
complexification  of an alternative algebra $A$ over $\mathbb R$.
Then it admits
a unique slice regular extension $f:\Omega_D\rightarrow \mathbb{A}$ with
$$\Omega_D:=\{\alpha+\beta J \ |\  \alpha +i \beta \in D, \alpha ,\beta \in \mathbb{R} , \ J\in \mathbb{S}_A\}\subseteq
\mathcal{Q}_{\mathbb{A}}$$ such that the following diagram commutes for every  $J\in \mathbb{S}_\mathbb{A}:$
\begin{figure}[htb]
  \centering
  \includegraphics{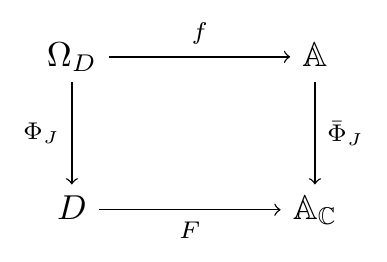}
\end{figure}

\noindent The construction above depends heavily on the so-called slice complex nature of $\mathcal{Q}_{\mathbb{A}},$ the quadratic cone of $\mathbb{A},$ i.e.,
$$\mathcal{Q}_{\mathbb{A}}=\bigcup_{J\in \mathbb{S}_\mathbb{A}}\mathbb{C}_J,$$ and
$$\mathbb{C}_I\cap \mathbb{C}_J=\mathbb{R}$$ for all $I,J\in \mathbb{S}_\mathbb{A}$ with $I\not= \pm J.$
Here $\mathbb{S}_\mathbb{A}$ denotes the set of square roots of $-1$ in the algebra $\mathbb{A},$ i.e.,
$$\mathbb{S}_\mathbb{A}:=\{J\in\mathcal{Q}_\mathbb{A}\ \vert \ J^2=-1\},$$
and the two associated   maps are defined respectively by
$$\Phi_J(a+ib)=a+Jb,\qquad \forall\ a,b\in \mathbb{R},$$
$$\tilde{\Phi}_J(\alpha+i\beta)=\alpha+J\beta,\qquad \forall\ \alpha,\beta \in \mathbb{A}.$$
The preceding approach results in the slice theory which provides an effective generalization  of the theory of {\it one}  complex variable to the setting of quaternions or even more general algebras.

It is quite natural to do such an extension so that to generalize  the theory of several complex variables to the setting of non-commutative or non-associative realm.

%
%

The first attempt was given by  Ghiloni and Perotti  \cite{Ghiloni2012002}.  They  introduced the class of slice regular functions of several Clifford variables. The
definition of the slice functions is based on the concept of stem functions of several variables.

To compare their theory with ours, we need to recall their construction in some details.

Let $\mathbb{R}_n$ denote the real   Clifford algebra of signature $(0,n)$ generated by $e_1,\cdots,e_n.$ Its elements can be expressed as
$$x=\sum_{K\in \mathcal{P}(n)}x_Ke_K,$$
where the coefficients $x_K\in \mathbb{R}$,  the products $e_K:=e_{k_1}e_{k_2}\cdots e_{k_r}$ are the basis elements of the Clifford algebra $\mathbb{R}_n$, and the sum runs over the set
\begin{equation*}
\begin{split}
\mathcal{P}(n)&=\{(k_1,\cdots, k_r)\in \mathbb{N}^r \ | \ r=0, 1,\cdots,n,\ 1\leq k_1< \cdots <k_r\leq n\}.
\end{split}
\end{equation*}
 The unit of the Clifford algebra corresponds to $K=\varnothing$, and we set $e_\phi=1$.

Consider the stem function $$F:D\rightarrow \mathbb{R}_m\otimes \mathbb{R}_n,$$
defined in
 an open set in $\mathbb{C}^n,$ invariant w.r.t. complex conjugation in every variable $z_1,\cdots,z_n.$
If we denote it by
\begin{eqnarray} \label{def:capital-F} F=\sum_{K\in \mathcal{P}(n)}e_KF_K, \qquad F_K:D\rightarrow \mathbb{R}_m, \end{eqnarray} then as a stem function, it satisfies
the Clifford-intrinsic condition. That is, for each $K\in \mathcal{P}(n),\ k\in \{1,\cdots,n\}$,
and $z=(z_1,\cdots,z_n)\in D,$ the components $F_K$ satisfy the compatibility conditions
\[F_K(z_1,\cdots,z_{k-1},\overline{z_k},z_{k+1},\cdots,z_n)=
\begin{cases} F_K(z) &\text{if $k\not \in K$},\\
-F_K(z) & \text{if $k\in K$}.
\end{cases}
\]

Let $\Omega(D) $ be the circular subset of $(\mathcal{Q}_m)^n$ associated to $D\subset \mathbb{C}^n$. More precisely, $\Omega(D)$ consists of all
$x=(x_1,\cdots,x_n)\in (\mathcal{Q}_m)^n$ with
$$x_k=\alpha_k+\beta_kJ_k\in \mathbb{C}_{J_k}$$
for any  $J_k\in \mathbb{S}_m$ and  $\alpha_k,\beta_k\in \mathbb R$ provided $(\alpha_1+i\beta_1,\cdots,\alpha_n+i\beta_n)\in D$.
Here  $\mathcal{Q}_m$ stands for the quadratic cone in $\mathbb R_m$, i.e.,
 $$\mathcal{Q}_m:=\mathbb{R} \cap \{x\in \mathbb{R}_m \ | \ t(x)\in \mathbb{R}, \ n(x)\in \mathbb{R},\ 4n(x)>t(x)^2  \},$$
where $t(x)=x+\bar x$ denotes  the trace of $x$ and $n(x)=x \bar x$ the (squared) norm of a Clifford element $x$.
As usual, we take $\mathbb{S}_m$ in place of $\mathbb S_{\mathbb A}$ in the case of $\mathbb A=\mathbb R_m$.

With the stem function $F$ given by (\ref{def:capital-F}) and  $$x=(x_1,\cdots,x_n)=(\alpha_1+J_1\beta_1,\cdots,\alpha_n+J_n\beta_n)\in\Omega(D),$$
 the slice function $\mathcal{I}(F)$ is defined as
$$\mathcal{I}(F)(x):=\sum_{K\in \mathcal{P}(n)}J_KF_K(\alpha_1+i\beta_1,\cdots,\alpha_n+i\beta_n).$$
Here
\begin{eqnarray}\label{def:complex-structurs} J_K:=\prod_{k\in K}^\rightarrow  J_k= J_{k_1}\cdots J_{k_s},\end{eqnarray}
is the odered product.

When the stem function $F$ is  holomorphic, i.e.,
$$\frac{1}{2}\big(\frac{\partial F}{\partial \alpha_k}+{J}_k \frac{\partial F}{\partial \beta_k}\big)=0, \qquad k=1,\cdots,n. $$
the slice function  $\mathcal{I}(F)$ is called  slice regular on $\Omega_D.$
Many results for slice regular functions  are announced in \cite{Ghiloni2012002}.

Observed that the setting considered in \cite{Ghiloni2012002} is too general,  Colombo, Sabadini, and Struppa
  \cite{Colombo2012003} chose to  move on
 in  some special case about   the slice regular functions with $m=2$  in which they can provide detail proofs.
Moreover,  to get rid of the compatibility   conditions  they restrict  their consideration to the case of the upper half space, i.e., they only consider the domain
$$D\subset(\mathbb{R}\times \mathbb{R}^+)^n.$$

The purpose of this article is to establish the slice theory of several octonionic variables,
which is as a generalization of the theory of several complex variables, instead of the theory of one complex variable.

To overcome the difficulties appearing  in \cite{Ghiloni2012002, Colombo2012003}, We adopt a new  trick by restrict our attention to
the same complex structure $J$, in contrast to the classical case
where the imaginary units may be distinct. Our approach makes many results of several complex variables extended  to the non-commutative or non-associative setting with the help of the theory of stem functions.
In particular, we establish
the Bochner-Martinelli formula for slice functions   and Hartogs theorem for slice regular functions in several octonionic variables as well as several quaternionic variables.

%


\section{Slice functions of several octonionic variables}

Let $\mathbb{O}_{\mathbb{C}}=\mathbb{O}\otimes_{\mathbb{R}}{\mathbb{C}}$
be  the complexification of the octonions $\mathbb{O}$,  which can also be expressed as
$$\mathbb{O}_{\mathbb{C}}=\mathbb O+i\mathbb O=\{ \omega =x+iy\ \vert \ x,y \in  \mathbb{O}\} \qquad    ( i^2=-1).$$
It is a complex alternative algebra with a unity w.r.t. the product given by the formula
$$(x+iy)(u+iv)=xu-yv+i(xv+yu).$$
For each $x,y\in \mathbb{O},$ we define  $(x+iy)^c=\overline x +i\overline y$ be the complex-linear antiinvolution of $x+iy$ in $\mathbb{O}_{\mathbb{C}}$ and $\overline{x+iy}=x-iy$ be the complex conjugation of $x+iy$ in $\mathbb{O}_{\mathbb{C}}.$

\begin{defn}\label{B}
Let $D$ be an open subset in $\mathbb{C}^n.$ A function $$F: D\rightarrow\mathbb{O}_{\mathbb{C}}$$
is called an $\mathbb{O}$-stem function on $D$, if $F$ is complex intrinsic, i.e.
$$F(\overline{z})=\overline{F(z)}, \qquad \forall \ z\in D.$$
\end{defn}

Notice that by  complex intrinsincity, $F$ can be extended to the  axially symmetric set generated by $D$, i.e.,
$D\cup conj(D)$, where
$$ conj(D):=\{ z\in \mathbb{C}^n \vert\ \overline z \in D\}.$$
However, the extended set may  be non-connected.

\begin{rem}\label{rm-1}
$(1)$  In the case of $n=1$,    Definition \ref{B} was introduced  by Ghiloni and Petrotti  in \cite{Ghiloni2011002} even in any  alternative algebra instead of octonions. Here, we initiate the study to the case of higher dimensions with $n>1$.

$(2)$A function $F=F_1+iF_2$ is a $\mathbb{O}$-stem function if and only if the $\mathbb{O}$-valued components $F_1,F_2$ constitute  an even-odd pair,
i.e.,
$$F_1(\overline{z})=F_1(z), \qquad F_2(\overline{z})=-F_2(z), \qquad \forall\  z\in D.$$

$(3)$ By means of a basis
$\mathcal{B}=\{e_0,\cdots,e_7 \}$ of $\mathbb{O}$
 as a 8-dimensional real vector space.
 $F$ can be identity with a complex intrinsic surface in $\mathbb{C}^8.$
Let $$F(z)=F_1(z)+iF_2(z)=\sum_{k=0}^7 F^k(z)e_k$$ with $F^k(z)\in \mathbb{C}.$
Then$$\tilde F=(F^0,\cdots,F^7):D\rightarrow \mathbb{C}^8 $$ satisfies $$\tilde F(\overline z)= \overline {\tilde F(z)}.$$ Giving $\mathbb{O}$ the unique manifold structure as a real vector space, we get that a stem function $F$
is of class $C^k(k=0,\cdots,\infty)$ or real-analytic if and only if the same property for $\tilde F.$
This notion is clearly independent of the choice of the basis of $\mathbb{O}.$

In several octonionic variables $\mathbb O^n,$ we define  
$$\mathbb{O}_s^n:=\bigcup_{\,J \in\mathbb{S}} \mathbb{C}_J^n,$$
where $\mathbb{S}$ denotes the unit sphere of the imaginary octonions, i.e.,

$$\mathbb{S}=\{a\in \mathbb{O}\ | \ a^2=-1\},$$
and
 $$\mathbb{C}_J^n:=(\mathbb{C}_J)^n$$
with
$$\mathbb{C}_J:=\mathbb{R}+J\mathbb R\subset\mathbb O, \qquad \forall\ J\in \mathbb{S}.$$

\end{rem}	
Given an open subset $D$ of $\mathbb{C}^n,$ let $\Omega_D$ be the subset of $\mathbb{O}_s^n$ generated by $D$:
$$\Omega_D:=\{x=\alpha+\beta J\in \mathbb{C}_J^n \ |\  \forall J\in \mathbb{S},\ \alpha ,\beta \in \mathbb{R}^n\  \text{with}\ \alpha +i \beta \in D \}\subset \mathbb{O}_s^n,$$
and
$$D_J:=\Omega_D \cap \mathbb{C}_J^n,\qquad J\in \mathbb{S}.$$
 
Sets of this type as $\Omega_D$ will be called circular sets in $\mathbb{O}_s^n,$ which is an open subset of $\mathbb{O}_s^n.$

\begin{defn}\label{def-I}A stem function $F:D\rightarrow \mathbb{O}_\mathbb{C}$ induces a (left) slice function $$f=\mathcal{I}(F):\Omega_D \rightarrow \mathbb{O}.$$
 For any  $x=\alpha+J\beta \in D_J,\ \forall J\in\mathbb S$, we set
\begin{eqnarray}\label{eq:slice-fun}f(x):=F_1 (z)+JF_2 (z) \qquad  (z=\alpha + i \beta ).\end{eqnarray}
\end{defn}
The slice function is well defined, since $(F_1,F_2)$ is an even-odd pair w.r.t. $\beta$ and then
$$f(\alpha+(-\beta )(-J))=F_1(\bar {z})+(-J)F_2(\bar {z})=F_1(z)+JF_2(z)=f(\alpha+\beta J).$$

There is an analogous definition for right slice functions when the element $J\in \mathbb{S}$ is placed on the right of $F_2(z).$ From now on,  the term slice functions will always mean left slice function.

We denote the set of $\mathbb O$-stem functions on $D$ as
$$\mathfrak{S}(D, \Omega)
:=\{F: D\rightarrow \mathbb{O}_{\mathbb C}\ |\ F:D\rightarrow \mathbb{O}_\mathbb{C}\ \text{is\ a}\ \mathbb{O}\text{-stem\ function}\}$$
and  denote the set of (left) slice function on $\Omega_D$ by
$$\mathcal{S}(\Omega_D,\mathbb{O}):=\{f:\Omega_D\rightarrow \mathbb{O}\ |\ f=\mathcal{I}(F),\ F:D\rightarrow \mathbb{O}_\mathbb{C}\ \text{is\ a}\ \mathbb{O}\text{-stem\ function}\}.$$
Therefore,
the lift map $\mathcal I$ is a bijection
$$\mathcal I:
\mathfrak{S}(D, \Omega)\longrightarrow \mathcal{S}(D, \Omega).$$

\begin{rem}
$\mathcal{S}(\Omega_D,\mathbb O)$	is a real vector space, since
$$\mathcal{I}(F+G)=\mathcal{I}(F)+\mathcal{I}(G),$$
and
$$\mathcal{I}(aF)=a\mathcal{I}(F)$$
for every complex intrinsic function $F$,\ $G$ on $D$ and $a\in \mathbb{R}$.
\end{rem}

From Definition \ref{def-I}, we obtain the following representation formulas for slice functions. 

\begin{prop}\label{pro-1}
Let $f\in \mathcal{ S} (\Omega_ D, \mathbb O)$, and $J$,$K\in \mathbb{S}$ with $J\not=K$. Then
$$f(\alpha+\beta I)=(I-K)\Big((J-K)^{-1}f(\alpha+\beta J)\Big)-(I-J)\Big((J-K)^{-1}f(\alpha+\beta K)\Big)$$
for each $I\in \mathbb S,\ \alpha, \beta\in\mathbb R^n$ with $\alpha+\beta I\in D_I.$
\end{prop}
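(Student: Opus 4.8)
The plan is to reduce the asserted identity to the defining formula \eqref{eq:slice-fun} for the slice function and then to invert a $2\times 2$ ``linear system'' in the two stem components, taking care throughout that $\mathbb{O}$ is only alternative, not associative. Fix $\alpha,\beta\in\mathbb{R}^n$ with $\alpha+\beta I\in D_I$, so that $\alpha+\beta J\in D_J$ and $\alpha+\beta K\in D_K$ as well, all arising from the same point $z=\alpha+i\beta\in D$, and abbreviate $F_1:=F_1(z)$, $F_2:=F_2(z)\in\mathbb{O}$. By Definition~\ref{def-I} we have $f(\alpha+\beta L)=F_1+LF_2$ for every $L\in\mathbb{S}$, in particular for $L\in\{I,J,K\}$. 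Set $a:=J-K$; since $J\neq K$, the octonion $a$ is a nonzero imaginary element, hence invertible, and $a^{-1}=\overline a/n(a)$ is a (negative) real multiple of $a$ — in particular $a^{-1}$ lies in the associative, commutative subalgebra $\mathbb{R}[a]$ generated by $a$.

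First I would expand the right-hand side of the claimed formula using only the bi-distributivity of the octonionic product, which holds unconditionally, and separate the terms carrying $F_1$ from those carrying $F_2$. The $F_1$-contribution is $(I-K)(a^{-1}F_1)-(I-J)(a^{-1}F_1)$; since right multiplication by the fixed element $a^{-1}F_1$ is $\mathbb{R}$-linear, this equals $\big((I-K)-(I-J)\big)(a^{-1}F_1)=a(a^{-1}F_1)$, which is $F_1$ by Artin's theorem (the subalgebra generated by the two elements $a$ and $F_1$ is associative and contains $a^{-1}\in\mathbb{R}[a]$, so $a(a^{-1}F_1)=(aa^{-1})F_1=F_1$). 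Thus the whole problem reduces to the single identity $(I-K)\big(a^{-1}(JF_2)\big)-(I-J)\big(a^{-1}(KF_2)\big)=IF_2$.

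To prove this, expand the left side again by distributivity as $I\big(a^{-1}(JF_2)-a^{-1}(KF_2)\big)-K\big(a^{-1}(JF_2)\big)+J\big(a^{-1}(KF_2)\big)$. The first summand equals $I\big(a^{-1}((J-K)F_2)\big)=I\big(a^{-1}(aF_2)\big)=IF_2$, once more by associativity of the two-generated subalgebra $\langle a,F_2\rangle$. It therefore remains to establish the cancellation $K\big(a^{-1}(JF_2)\big)=J\big(a^{-1}(KF_2)\big)$. Substituting $a^{-1}=c(K-J)$ with $c=n(J-K)^{-1}>0$ and computing each side by repeated use of the left alternative law $x(xy)=x^2y$ — for instance $J(JF_2)=-F_2$, $K\big(K(JF_2)\big)=-JF_2$, and symmetrically on the other side — one finds that both $K\big((K-J)(JF_2)\big)$ and $J\big((K-J)(KF_2)\big)$ reduce to $(K-J)F_2$, so the two sides coincide. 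Combining with the $F_1$-part yields $F_1+IF_2=f(\alpha+\beta I)$, as required.

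The step I expect to be the crux is precisely this last cancellation: a priori $K\big(a^{-1}(JF_2)\big)$ and $J\big(a^{-1}(KF_2)\big)$ are products of three mutually non-associating imaginary units together with $F_2$, so Artin's theorem does not apply to them directly. What rescues the computation is that, once $a^{-1}$ is rewritten as a real combination of $J$ and $K$, every product that occurs can be collapsed by the left alternative law alone (squaring a single unit at a time), so one never has to reassociate three genuinely independent elements. I would also note in passing that only $J\neq K$ (equivalently $a\neq 0$) is used, so the degenerate case $K=-J$ requires no separate treatment.
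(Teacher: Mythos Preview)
Your proof is correct, and your care with the alternative laws (left alternativity $x(xy)=x^2y$ and Artin's theorem on two-generated subalgebras) is more explicit than the paper's own treatment.

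The paper, however, proceeds in the opposite direction and thereby sidesteps the cancellation identity $K\big(a^{-1}(JF_2)\big)=J\big(a^{-1}(KF_2)\big)$ that costs you the most effort. Instead of expanding the right-hand side in terms of $F_1,F_2$, the paper \emph{derives} the formula: it subtracts the two slice evaluations to get $F_2=(J-K)^{-1}\big(f(\alpha+\beta J)-f(\alpha+\beta K)\big)$, writes $f(\alpha+\beta I)=f(\alpha+\beta J)+(I-J)F_2$, and then only has to rewrite the first summand as $(J-K)\big((J-K)^{-1}f(\alpha+\beta J)\big)$ (a single use of Artin on the pair $\{J-K,\,f(\alpha+\beta J)\}$) to combine it with the $(I-J)$ term by right distributivity. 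Because the factors $J$ and $K$ are never separated from $F_2$ in this route, no three-unit reassociation arises at all. Your backward verification is entirely valid, and your reduction of the cancellation to the left alternative law alone is elegant; the paper's forward derivation is simply shorter.
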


\begin{proof}
For any $f\in \mathcal{S} (\Omega_ D, \mathbb O)$, by (\ref{eq:slice-fun}),
$$ f(\alpha +\beta J)-f(\alpha-\beta K)=(J-K)F_2(\alpha+\beta i)$$
Hence $$F_2(\alpha+\beta i)=(J-K)^{-1}(f(\alpha +\beta J)-f(\alpha+\beta K))$$ and \ 	
\begin{equation*}
\begin{split}
F_1(\alpha+\beta i)=&f(\alpha+\beta J)-JF_2(\alpha+\beta i)\\=&f(\alpha+\beta J)-J((J-K)^{-1}(f(\alpha +\beta J)-f(\alpha+\beta K))).
\end{split}
\end{equation*}
Therefore,
\begin {equation*}
\begin {split}
f(\alpha+\beta I )=&F_1(\alpha+\beta i)+IF_2(\alpha+\beta i)
\\=&f(\alpha+\beta J)+(I-J)((J-K)^{-1}(f(\alpha +\beta J)-f(\alpha+\beta K)))\\=&(J-K+I-J)((J-K)^{-1}f(\alpha +\beta J))+(I-J)((J-K)^{-1} f(\alpha+\beta K))\\=&(I-K)((J-K)^{-1}f(\alpha +\beta J))+(I-J)((J-K)^{-1} f(\alpha+\beta K)).
\end{split}
\end{equation*}
\end{proof}
By settting $K=-J$ in Proposition \ref{pro-1}, we obtain the following result.
\begin{cor}
Let $f\in \mathcal{ S} (\Omega_ D,\mathbb O)$ and $J\in \mathbb{S}$. Then 
$$f(\alpha+\beta I )= \frac{1}{2}(f(\alpha+\beta J)+f(\alpha-\beta J))-\frac{I}{2}(J(f(\alpha+\beta J)-f(\alpha-\beta J)))$$
for each  $I\in \mathbb S,\ \alpha, \beta\in\mathbb R^n$ with $\alpha+\beta I\in D_I.$
\end{cor}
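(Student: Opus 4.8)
The plan is to obtain this corollary as the announced specialization $K=-J$ of Proposition~\ref{pro-1}. The first step is to record the elementary facts that make the substitution clean: since $J\in\mathbb S$ satisfies $J^2=-1$, it is invertible in $\mathbb O$ with $J^{-1}=-J$, so with $K=-J$ the element $J-K=2J$ is invertible and $(J-K)^{-1}=(2J)^{-1}=-\tfrac12 J$; moreover $\alpha+\beta K=\alpha-\beta J$ and $I-K=I+J$. Plugging these into the representation formula of Proposition~\ref{pro-1} gives
\begin{equation*}
f(\alpha+\beta I)=(I+J)\Big(-\tfrac12 J\,f(\alpha+\beta J)\Big)-(I-J)\Big(-\tfrac12 J\,f(\alpha-\beta J)\Big),
\end{equation*}
and it remains to massage the right-hand side into the stated form.

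The second step is purely algebraic. Writing $a=f(\alpha+\beta J)$ and $b=f(\alpha-\beta J)$, I would expand both the displayed right-hand side and the target expression $\tfrac12(a+b)-\tfrac{I}{2}\big(J(a-b)\big)$ using only the left distributive law $(x+y)z=xz+yz$ — valid in any algebra — together with the alternative identity $J(Jx)=(J^2)x=-x$, which holds in $\mathbb O$ since $\mathbb O$ is alternative. Expanding $(I+J)(Ja)=I(Ja)+J(Ja)=I(Ja)-a$ and $(I-J)(Jb)=I(Jb)-J(Jb)=I(Jb)+b$, the displayed right-hand side collapses to $\tfrac12(a+b)-\tfrac12 I(Ja)+\tfrac12 I(Jb)$; expanding $-\tfrac{I}{2}(J(a-b))=-\tfrac12 I(Ja)+\tfrac12 I(Jb)$ shows the target expression equals the same thing. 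Hence the two sides agree.

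The only genuine care needed — and the one place the argument could go astray — is that $\mathbb O$ is nonassociative, so one must scrupulously avoid reassociating a product of three distinct octonions (in particular one must keep $I(Ja)$ intact rather than rewriting it as $(IJ)a$); every reassociation invoked above involves a \emph{repeated} factor $J$ and is therefore licensed precisely by the alternative law. As a consistency check one can verify, exactly as in the remark following Definition~\ref{def-I}, that the stated right-hand side is invariant under $(I,\beta)\mapsto(-I,-\beta)$, which also interchanges $a\leftrightarrow b$, so the formula is compatible with the well-definedness of $f$.
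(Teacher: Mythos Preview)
Your proof is correct and follows exactly the approach the paper indicates: the paper merely states that the corollary follows by setting $K=-J$ in Proposition~\ref{pro-1}, and you have carefully carried out that substitution with due attention to the alternative law (using $J(Jx)=-x$) and the nonassociativity of $\mathbb O$. Your write-up in fact supplies the computational details the paper omits; a tiny terminological slip is that you call $(x+y)z=xz+yz$ the \emph{left} distributive law when it is the right one, but both distributive laws hold in $\mathbb O$ so nothing is affected.
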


When $n=1$, Ghiloni and Petrotti   \cite{Ghiloni2011002} introduced the useful  concepts of the spherical derivative and the spherical value. Now we generalize them to the slice functions of several octonionic variables.

\begin{defn}
Let $f\in \mathcal{ S} (\Omega_ D, \mathbb O).$ The  spherical value of $f$ at $x\in \Omega_D$ is the element of $\mathbb{O}$
$$v_sf(x):=\frac {1}{2}(f(x)+f(\overline x ))$$	
and the spherical derivative of $f$ at $x\in \Omega_ D \backslash \mathbb{R}^n$ is the element of $\mathbb{O}$
$$\partial_s f(x):=\frac {1}{2} \mbox{\it Im}(x) ^{-1} (f(x)-f(\overline x )).$$

In this way, we get two slice functions associated with $f$, given by (\ref{eq:slice-fun}). Namely,  $v_sf$ is induced on $\Omega_ D$ by the stem function $F_1(z)$ and $\partial _s f$ is induced on $\Omega_ D \backslash \mathbb{R}^n$ by $F_2(z).$

Since these stem functions are $\mathbb{O}$ valued,\ $v_sf$ and $\partial_s f$ are constant on every ``sphere"
$$\mathbb{S}_x :=\{ y=\alpha +\beta I \ \vert \ x=\alpha+\beta J,\ \alpha,\ \beta \in \mathbb{R}^n,\ I\in \mathbb{S}\}.$$
Therefore $$\partial_s(\partial_s f)=0, \qquad \partial_s(v_sf)=0$$ \ for every $f.$\ Moreover, $\partial_sf(x)=0$\ if and only if $f$ is constant on $\mathbb{S}_x.$ In this case, $f$ has value $v_sf(x)$ on $\mathbb{S}_x.$

If\ $\Omega_ D \cap \mathbb{R}^n\not =\emptyset,$
under mild reguarity conditions on $F,$ we get that $\partial_sf$ can be contiously extended as a slice function on $\Omega_D.$ For example, it is sufficient to assume that $F_2(z)$ is of class $C^1.$ By definition, the following identity holds for every $x\in \Omega_D:$
$$f(x) =v_sf(x)+\mbox{\it Im}(x) \partial_sf(x).$$
\end{defn}

We will consider slice functions of several octonionic variables induced by  stem functions of class $C^1.$ They consist of  the real vector space
$$\mathcal{S}^1(\Omega_D,\mathbb{O}):=\{f= \mathcal{I}(F) \in\mathcal{S }(\Omega_D,\mathbb{O})|\ F\in C^{1}(D,\mathbb{O}_{\mathbb{C}})\}.$$

Let $f= \mathcal{I}(F) \in\mathcal{S }^1(\Omega_D,\mathbb{O})$ and $z=\alpha+i\beta \in D.$
Then the partial derivatives $\partial F\slash \partial \alpha_t$ and
$i\partial F\slash \partial \beta_t$ are continous $\mathbb{O}-$stem functions on $D$ for any $t=1,2,\cdots,n.$
The same property holds for their linear combinations
$$\frac{\partial F}{\partial z_t}=\frac{1}{2}\left(\frac{\partial  F}{\partial \alpha_t}
-i\frac{\partial F}{\partial \beta_t}\right) \qquad \mbox{and} \qquad \frac{\partial F}{\partial \overline z_t}=\frac{1}{2}\left(\frac{\partial F}{\partial \alpha_t}+i\frac{\partial F}{\partial \beta_t}\right),$$
where $z=(z_1,\cdots,z_n),\ \alpha=(\alpha_1,\cdots,\alpha_n),\ \beta=(\beta_1,\cdots,\beta_n),\ t=1,2,\cdots,n.$

\begin{defn}\label{rm-2}
Let $f= \mathcal{I}(F) \in\mathcal{S }^1(\Omega_D,\mathbb{O}).$ We set
$$\frac{\partial f}{\partial x}:= \mathcal{I}(\frac {\partial F}{\partial z})=\big(\mathcal{I}(\frac {\partial F}{\partial z_1}),\cdots,\mathcal{I}(\frac {\partial F}{\partial z_n})\big),$$
and
$$ \frac{\partial f}{\partial \bar x}:= \mathcal{I}(\frac{\partial F}{\partial \bar z})= \big(\mathcal{I}(\frac {\partial F}{\partial \overline z_1}),\cdots,\mathcal{I}(\frac {\partial F}{\partial \overline z_n})\big)$$
with
$$\frac{\partial f}{\partial x}=\big(\frac{\partial f}{\partial x_1},\cdots,\frac{\partial f}{\partial x_n}\big) \qquad \text{and} 
\qquad \frac{\partial f}{\partial \bar x}=\big(\frac{\partial f}{\partial \overline x_1},\cdots,\frac{\partial f}{\partial \overline x_n}\big).$$
These maps are continous slice maps on $\Omega_D,\ t=1,2,\cdots,n.$
\end{defn}

\section{Slice regular functions of several octonionic variables}

Left multiplication by $i$ defines a complex structure on $\mathbb{O}_\mathbb{C}.$ With respect to this structure, a $C^1$ function $$ F=F_1+iF_2:D\rightarrow \mathbb{O}_\mathbb{C}$$ is holomorphic if and only if its components $F_1,$ $F_2$
satisfy the Cauchy-Riemann equations:
$$\frac{\partial  F_1}{\partial \alpha_t}=\frac{\partial  F_2}{\partial \beta_t},\qquad
\frac{\partial  F_1}{\partial \beta_t}=-\frac{\partial  F_2}{\partial \alpha_t}, \qquad (z=\alpha+i \beta \in D)
$$
i.e., $$ \frac{\partial  F}{\partial \overline z_t}=0,$$
where $z=(z_1,\cdots,z_n),\ \alpha=(\alpha_1,\cdots,\alpha_n),\ \beta=(\beta_1,\cdots,\beta_n),\ t=1,2,\cdots,n.$

This condition is equivalent to require that, for any basis $\mathcal{B},$
the complex surface $\tilde{F}$\ (see Remark \ref{rm-1}) is holomorphic.
Set
\begin{eqnarray*}\frac{\partial F}{\partial z}&:=&\biggl(\frac{\partial F}{\partial  z_1},
\frac{\partial F}{\partial  z_2},\cdots,\frac{\partial F}{\partial z_n}\biggr),
\\
\frac{\partial F}{\partial \bar z}&:=&\biggl(\frac{\partial F}{\partial  \bar z_1},
\frac{\partial F}{\partial  \bar z_2},\cdots,\frac{\partial F}{\partial \bar z_n}\biggr).\end{eqnarray*}

The set of all holomorphic $\mathbb O$-stem functions is denoted by
$$\mathcal{H}(D,\mathbb{O}_{\mathbb{C}}):=\{F\in C^1(D,\mathbb{O}_{\mathbb{C}}):\frac{\partial F}{\partial \overline z}(z)=0, \ \forall z\in D\}.$$

\begin{defn}\label{Def1}
	A (left) slice function $f= \mathcal{I}(F) \in\mathcal{S }^1(\Omega_D,\mathbb{O})$ is (left) slice regular if its associated  stem function $F$ is holomorphic. We will denote the vector space of slice regular functions on $\Omega_D$ by
		$$\mathcal{SR}(\Omega_D,\mathbb{O}):=\{f\in\mathcal{S}^1(\Omega_D,\mathbb{O})\ |\ f= \mathcal{I}(F), F:D\rightarrow \mathbb{O}_\mathbb{C} \ \text {is\ holomorphic}\}.$$
\end{defn}

\begin{rem}
A function $f\in \mathcal{S}^1(\Omega_D,\mathbb{O})$ is slice ragular if and only if the slice map $$\frac{\partial f}{\partial \overline x}=\left(\frac{\partial f}{\partial \overline x_1},\cdots,\frac{\partial f}{\partial \overline x_n}\right)$$\ (cf. Definition \ref{rm-2} in Section 2)  vanishies identically. Moreover, if $f$ is slice regular, then also $$\frac{\partial f}{\partial x}=\left(\mathcal{I}(\frac {\partial F}{\partial z_1}),\cdots,\mathcal{I}(\frac {\partial F}{\partial z_n})\right) $$ is slice regular on $\Omega_D.$
\end{rem}

\begin{prop}
Let	$f= \mathcal{I}(F) \in\mathcal{S }^1(\Omega_D,\mathbb{O}).$ Then $f$ is slice regular on  $\Omega_D$ if and only if the restriction
$$f_J=f\vert _{D_J}:D_J \rightarrow \mathbb{O}$$
is holomorphic for every $J\in \mathbb{S}$ with respect to the complex structures on $D_J$ and $\mathbb{O}$ defined by left multiplication by $J.$
\end{prop}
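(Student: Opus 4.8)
The plan is to translate the holomorphy condition on the stem function into the holomorphy of each slice restriction, working one complex structure $J$ at a time and exploiting that $F$ and $f=\mathcal I(F)$ are linked fibrewise by $f(\alpha+J\beta)=F_1(z)+JF_2(z)$ with $z=\alpha+i\beta$. First I would fix $J\in\mathbb S$ and identify $\mathbb C_J\subset\mathbb O$ with $\mathbb C$ via $\alpha+J\beta\mapsto\alpha+i\beta$, so that $D_J$ is identified with $D$ (or with the relevant connected component) and the map $\Phi_J^n\colon D\to D_J$ becomes a holomorphic diffeomorphism with respect to the product complex structure given by multiplication by $J$ on each slot. Under this identification the restriction $f_J$ is precisely the map $z\mapsto F_1(z)+JF_2(z)$, so I must compare the Cauchy--Riemann operator in the variable $x_t=\alpha_t+J\beta_t$ acting on $f_J$ with the Cauchy--Riemann operator $\partial/\partial\bar z_t$ acting on $F$.

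The key computation is the following: for each $t$,
\begin{equation*}
\frac{\partial f_J}{\partial\alpha_t}+J\frac{\partial f_J}{\partial\beta_t}
=\Bigl(\frac{\partial F_1}{\partial\alpha_t}+J\frac{\partial F_1}{\partial\beta_t}\Bigr)
+J\Bigl(\frac{\partial F_2}{\partial\alpha_t}+J\frac{\partial F_2}{\partial\beta_t}\Bigr),
\end{equation*}
where I have used that left multiplication by the fixed element $J$ is $\mathbb R$-linear and commutes with real partial differentiation, and — crucially — that the only products appearing are of the \emph{single} element $J$ against octonion-valued quantities, so that no associativity issue arises (powers of a single imaginary unit generate an associative, indeed commutative, subalgebra $\mathbb C_J$). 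Expanding $J(\partial F_2/\partial\alpha_t+J\,\partial F_2/\partial\beta_t)=J\,\partial F_2/\partial\alpha_t-\partial F_2/\partial\beta_t$, the right-hand side becomes
\begin{equation*}
\Bigl(\frac{\partial F_1}{\partial\alpha_t}-\frac{\partial F_2}{\partial\beta_t}\Bigr)
+J\Bigl(\frac{\partial F_1}{\partial\beta_t}+\frac{\partial F_2}{\partial\alpha_t}\Bigr)
=2\,\mathcal I\!\left(\frac{\partial F}{\partial\bar z_t}\right)\!(\,\cdot\,),
\end{equation*}
which is exactly (twice) the slice function induced by $\partial F/\partial\bar z_t$, evaluated at $\alpha+J\beta$. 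Thus $\partial f_J/\partial\bar x_t=0$ on $D_J$ for all $t$ and all $J$ if and only if $\mathcal I(\partial F/\partial\bar z_t)\equiv 0$, and since $\mathcal I$ is a bijection (stated in the excerpt) this is equivalent to $\partial F/\partial\bar z_t\equiv 0$ on $D$, i.e.\ to $F\in\mathcal H(D,\mathbb O_{\mathbb C})$, i.e.\ to $f$ being slice regular by Definition \ref{Def1}.

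For the two directions separately: if $f$ is slice regular, then $\partial F/\partial\bar z_t\equiv0$, and the computation above shows each $f_J$ is holomorphic; conversely, if every $f_J$ is holomorphic, the computation gives $\mathcal I(\partial F/\partial\bar z_t)=0$ for every $t$, hence $\partial F/\partial\bar z_t=0$ by injectivity of $\mathcal I$ on stem functions of class $C^0$ (note $\partial F/\partial\bar z_t$ is a continuous $\mathbb O$-stem function because $F\in C^1$, as remarked before Definition \ref{rm-2}), so $f$ is slice regular. I expect the only genuinely delicate point to be bookkeeping around non-associativity: one must check that every multiplication used is by the single fixed unit $J$ — never a product of two distinct units — so that the manipulations $J(Jw)=-w$ and $J(w+w')=Jw+Jw'$ are legitimate in $\mathbb O$; this is exactly the simplification afforded by the paper's choice to work with a single complex structure, and it makes the argument run identically to the several-complex-variables case.
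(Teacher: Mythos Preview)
Your proof is correct and follows essentially the same route as the paper: both compute $\partial f_J/\partial\alpha_t+J\,\partial f_J/\partial\beta_t=(\partial F_1/\partial\alpha_t-\partial F_2/\partial\beta_t)+J(\partial F_1/\partial\beta_t+\partial F_2/\partial\alpha_t)$ and read off the equivalence. The only cosmetic difference is that for the converse you invoke injectivity of $\mathcal I$ on stem functions, whereas the paper simply says ``from the arbitrariness of $J$'' to force both real components to vanish---these are the same argument.
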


\begin {proof}
Notice that  $$f_J(\alpha+\beta J)=F_1(\alpha+i\beta)+JF_2(\alpha+i\beta).$$ If $F$ is holomorphic,  then
$$\frac{\partial f_J}{\partial \alpha}+J \frac{\partial f_J}{\partial \beta}=\frac{\partial F_1}{\partial \alpha}+J \frac{\partial F_2}{\partial \alpha}+J(\frac{\partial F_1}{\partial \beta}+J \frac{\partial F_2}{\partial \beta})=0$$
at every point $x=\alpha+J\beta \in D_J.$

Conversely, assume that $f_J$ is holomorphic at every $J\in \mathbb{S}.$ Then
$$0=\frac{\partial f_J}{\partial \alpha}+J \frac{\partial f_J}{\partial \beta}=\frac{\partial F_1}{\partial \alpha}-\frac{\partial F_2}{\partial \beta}+J( \frac{\partial F_2}{\partial \alpha}+\frac{\partial F_1}{\partial \beta} )$$
at every point $z=\alpha+i \beta \in D.$ From the arbitrariness of $J$ it follows that $F_1,F_2$ satisfy the Cauchy-Riemann equations.
\end{proof}

\begin{rem}
The even-odd character of the pair $(F_1, F_2)$ and the proof of preceding proposition show that, in oder to get slice regularity of $f=\mathcal{I}(F)$ with $F\in C^1,$ it is sufficient to assume that two functions $f_J,\ f_K$ \ with $J\not =K$ are holomorphic on domains $D_J$ and $D_K$ respectively (cf. Proposition \ref{pro-1} ). The possibility $K=-J$ is not excluded which  means that the single function $f_J$ must be holomorphic on $D_J.$
\end{rem}

\section{Products of slice functions of several octonionic variables}

In general, the pointwise product of  two slice functions is not a slice function. However, pointwise product in the algebra $\mathbb{O}_\mathbb{C}$ of $\mathbb{O}$-stem functions induces a natural product on slice functions, which is similar to the case of slice function of one variable.

\begin{defn}
Let $f=\mathcal{I}(F),\ g=\mathcal{I}(G)\in \mathcal{S}(\Omega_D,\mathbb{O}).$\ The product of $f$ and $g$ is the slice function
	$$f\cdot g:=\mathcal{I}(FG)\in \mathcal{S}(\Omega_D,\mathbb{O}).$$
\end{defn}

The preceding definition is well-posed, since the pointwise product $$FG=(F_1+iF_2)(G_1+iG_2)=F_1G_1-F_2G_2+i(F_1G_2+F_2G_1)$$ of complex intrinsic functions is still complex intrinsic. It follows directly from the definition that the product is distributive.
The spherical derivative satisfies a Leibniz-type product rule, where evaluation is replaced by spherical value:
$$\partial_s(f\cdot g)=(\partial_sf) (v_sg)+(v_sf)(\partial_sg).$$

\begin{rem}
In general, $$(f\cdot g)(x)\not =f(x)g(x).$$ If $x=\alpha+\beta J$ belongs to $D_J$ and $z=\alpha+i\beta ,$
then $$(f\cdot g )(x)=F_1(z)G_1(z)-F_2(z)G_2(z)+J(F_1(z)G_2(z)+F_2(z)G_1(z),$$
while $$f(x)g(x)=F_1(z)G_1(z)+(JF_2(z))(JG_2(z))+F_1(z)(JG_2(z))+(JF_2(z))G_1(z).$$
If the components $F_1,F_2$ of the first stem function $F$ are real-valued, or if $F$ and $G$ are both $\mathbb{O}$-valued, then $$(f\cdot g)(x) =f(x)g(x), \qquad\forall\ x\in \Omega_D.$$ In this case, we will use also the notation $fg$ in place of $f\cdot g.$
\end{rem}

\begin{defn}
A slice function $f=\mathcal{I}(F)$ is called real if the $\mathbb{O}$-valued components $F_1,F_2$ of its stem function are real valued. Equivalently, $f$ is real if the spherical value $v_sf$ and the spherical derivative $\partial_sf$ are real valued.
\end{defn}

A real slice function $f$ has the characteristic property that for every $J\in \mathbb{S},$ the image $f_J$
is contained in $\mathbb{C}_J.$

\begin{defn}
A slice function $f\in \mathcal{S}(\Omega_D,\mathbb{O})$ is real if and only if $f_J(D_J) \subseteq \mathbb{C}_J $
for every $J\in \mathbb{S}.$
\end{defn}

\begin{proof}
Assume that $f(\mathbb{C}_J^n \cap \Omega_D) \subseteq \mathbb{C}_J$ for every $J\in \mathbb{S}.$ Let $f=\mathcal{I}(F).$
If $x=\alpha+J \beta \in \Omega_D$ and $z=\alpha+i \beta, $ then $$f(x)=F_1(z)+JF_2(z)\in \mathbb{C}_J$$ and
$$f(\bar x)=F_1(\bar z)+JF_2(\bar z)=F_1(z)-JF_2(z) \in \mathbb{C}_J.$$
This implies that $$F_1(z),F_2(z)\in  \bigcap_{J\in \mathbb S} \mathbb{C}_J=\mathbb{R}.$$
\end{proof}

\begin{prop}
If $f,g$ are slice regular on $\Omega_D,$ then the product $f\cdot g$ is slice regular on $\Omega_D.$
\end{prop}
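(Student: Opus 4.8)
The plan is to reduce the statement to the Leibniz rule for the Wirtinger operators $\partial/\partial\bar z_t$ acting on $\mathbb{O}_{\mathbb{C}}$-valued functions. Write $f=\mathcal{I}(F)$ and $g=\mathcal{I}(G)$ with $F,G\in\mathcal{H}(D,\mathbb{O}_{\mathbb{C}})$, so that by definition $f\cdot g=\mathcal{I}(FG)$. Since $F$ and $G$ are of class $C^1$ and the product of $\mathbb{O}_{\mathbb{C}}$ is continuous and bilinear, $FG\in C^1(D,\mathbb{O}_{\mathbb{C}})$; and it has already been observed that $FG$ is complex intrinsic, hence an $\mathbb{O}$-stem function. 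Thus it only remains to verify $\partial(FG)/\partial\bar z_t\equiv 0$ on $D$ for each $t=1,\dots,n$, which is precisely the condition $FG\in\mathcal{H}(D,\mathbb{O}_{\mathbb{C}})$, i.e. that $f\cdot g$ is slice regular.

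First I would record the elementary product rule for the real partial derivatives: for any fixed $t$ one has $\partial(FG)/\partial\alpha_t=(\partial F/\partial\alpha_t)G+F(\partial G/\partial\alpha_t)$, and similarly for $\partial/\partial\beta_t$. This holds component-wise in any real basis of $\mathbb{O}_{\mathbb{C}}$ because the algebra product is $\mathbb{R}$-bilinear; crucially, associativity plays no role, since only a single product of two factors ever appears. The one point that deserves care is that the imaginary unit $i$ of the complexification is \emph{central} in $\mathbb{O}_{\mathbb{C}}$ — it comes from the $\mathbb{C}$-factor in $\mathbb{O}\otimes_{\mathbb{R}}\mathbb{C}$ — so $i(uv)=(iu)v=u(iv)$ for all $u,v\in\mathbb{O}_{\mathbb{C}}$. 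Combining this with the definition $\partial/\partial\bar z_t=\tfrac12(\partial/\partial\alpha_t+i\,\partial/\partial\beta_t)$ yields the Wirtinger Leibniz rule
\[
\frac{\partial (FG)}{\partial\bar z_t}=\frac{\partial F}{\partial\bar z_t}\,G+F\,\frac{\partial G}{\partial\bar z_t},\qquad t=1,\dots,n.
\]

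Finally, since $F$ and $G$ are holomorphic, $\partial F/\partial\bar z_t=0$ and $\partial G/\partial\bar z_t=0$ for every $t$, so the right-hand side vanishes identically on $D$. Hence $FG\in\mathcal{H}(D,\mathbb{O}_{\mathbb{C}})$ and $f\cdot g=\mathcal{I}(FG)\in\mathcal{SR}(\Omega_D,\mathbb{O})$, as claimed. I expect no serious obstacle: the only thing to watch is not to invoke associativity of $\mathbb{O}_{\mathbb{C}}$ (which fails), as the argument genuinely requires nothing beyond bilinearity of the product together with the centrality of $i$.
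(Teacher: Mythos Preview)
Your argument is correct and follows essentially the same route as the paper: reduce to showing that $FG$ is holomorphic via the Leibniz rule for $\partial/\partial\bar z_t$, which the paper simply asserts ``can be checked using a basis representation of $F$ and $G$.'' Your version is more explicit in isolating the two ingredients (real bilinearity of the product and centrality of $i$ in $\mathbb{O}_{\mathbb{C}}$) and in noting that associativity is never invoked, but the underlying strategy is identical.
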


\begin{proof}
Let $f=\mathcal{I}(F),\ g=\mathcal{I}(G),\ H=FG.$ If $F$ and $G$ satisfy the Cauchy-Riemann equations, the same holds for $H.$ This follows from the validity of the Leibniz product rule, that can be checked using a basis representation of $F$ and $G.$
\end{proof}

We consider two polynomials or convergent power series
 $$f(x)=\sum_\mu x^\mu a_\mu, \qquad g(x)=\sum_\nu x^\nu b_\nu, $$
  where $$\mu=(\mu_1,\cdots,\mu_n)\in \mathbb N^n,\qquad  \nu=(\nu_1,\cdots ,\nu_n) \in \mathbb{N}^n, \qquad x^\mu :=x_1^{\mu_1}\cdots x_n^{\mu_n}$$ for $x=(x_1,\cdots,x_n)\in \mathbb{O}_s^n,$  and $$a_\mu:=a_{\mu_1\mu_2\cdots \mu_n}\mathbb O,\qquad b_\nu:=b_{\nu_1,\cdots \nu_n}\in \mathbb{O}.$$
  The star product $f*g$ of $f$ and $g$ is the convergent power series,  defined as
$$f*g:=\sum_\gamma x^\gamma(\sum_{\mu+\nu=\gamma} a_\mu b_\nu).$$

\begin{prop}
Let $f(x)=\sum_\mu x^\mu a_\mu$ and $g(x)=\sum_ \nu x^\nu b_\nu$ be polynomials or convergent power series, where
$a_\mu, b_\nu\in \mathbb{O},\ \mu, \ \nu  \in \mathbb{N}^n.$
Then the product of $f$ and $g$, viewed as slice regular functions, coincides with the star product 
$$ f*g=\mathcal{I}(FG)=\mathcal{I}(F)*\mathcal{I}(G).$$
\end{prop}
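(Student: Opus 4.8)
The plan is to reduce the statement to a direct computation by identifying the stem function of a monomial slice function and then showing that the pointwise product of stem functions of monomials reproduces the combinatorics of the star product. The key observation is that the monomial $f(x)=x^\mu a_\mu$ is itself slice regular, being induced by the holomorphic stem function $F^{(\mu)}(z)=z^\mu a_\mu$ (here $z^\mu := z_1^{\mu_1}\cdots z_n^{\mu_n}\in\mathbb{C}$ times $a_\mu\in\mathbb{O}$, which is a genuine $\mathbb{O}_{\mathbb{C}}$-valued complex-intrinsic function since $\overline{z^\mu a_\mu}=\overline{z}^\mu a_\mu$). Indeed, for $x=\alpha+J\beta\in D_J$ one checks $\mathcal{I}(z^\mu a_\mu)(x)=\operatorname{Re}(z^\mu)a_\mu+J\operatorname{Im}(z^\mu)a_\mu = (\alpha+J\beta)^\mu a_\mu = x^\mu a_\mu$, using that on the slice $\mathbb{C}_J^n$ the element $x^\mu$ is the image of $z^\mu$ under $\Phi_J$ and that $a_\mu$ commutes through because $z^\mu\in\mathbb{C}$ acts by real-scalar pairs. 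So the first step is to record this identification carefully, including the power-series case where one invokes uniform convergence on compact circular subsets to legitimize term-by-term application of $\mathcal{I}$.

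With the stem functions in hand, the second step is the bookkeeping. Since $\mathcal{I}$ is $\mathbb{R}$-linear (noted in the excerpt) and, for power series, continuous in the appropriate sense, we have $F=\mathcal{I}^{-1}(f)=\sum_\mu z^\mu a_\mu$ and $G=\sum_\nu z^\nu b_\nu$, the latter sums being the ordinary stem functions. Now compute the pointwise product in $\mathbb{O}_{\mathbb{C}}$:
\[
FG=\Big(\sum_\mu z^\mu a_\mu\Big)\Big(\sum_\nu z^\nu b_\nu\Big)=\sum_{\mu,\nu} z^\mu z^\nu\,(a_\mu b_\nu)=\sum_\gamma z^\gamma\Big(\sum_{\mu+\nu=\gamma} a_\mu b_\nu\Big),
\]
where the crucial point is that each $z^\mu$ and $z^\nu$ lies in the commutative, associative center $\mathbb{C}\cdot 1\subset\mathbb{O}_{\mathbb{C}}$, so scalars pull out freely and $z^\mu z^\nu=z^{\mu+\nu}$ with no reordering issue — the only genuinely algebraic move, the product $a_\mu b_\nu$, happens entirely inside $\mathbb{O}$ and is left untouched. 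Applying $\mathcal{I}$ to both sides and using the monomial identity from the first step on the right,
\[
\mathcal{I}(FG)=\sum_\gamma x^\gamma\Big(\sum_{\mu+\nu=\gamma} a_\mu b_\nu\Big)=f*g,
\]
while by definition $\mathcal{I}(FG)=f\cdot g=\mathcal{I}(F)*\mathcal{I}(G)$ once we know $\mathcal{I}(F)=f$ and $\mathcal{I}(G)=g$, which is again the monomial identity summed.

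**The main obstacle** I anticipate is not conceptual but bookkeeping in the power-series case: one must justify that the Cauchy-product rearrangement of $FG$ is valid (absolute convergence of $\sum_{\mu,\nu}\|a_\mu\|\,\|b_\nu\|\,|z|^{\mu+\nu}$ on a suitable polydisc-like circular domain) and that $\mathcal{I}$ commutes with these infinite sums — the latter because $\mathcal{I}$ is built from $\mathbb{R}$-linear operations and the formula $\mathcal{I}(\sum_k F^{(k)})(x)=\sum_k \mathcal{I}(F^{(k)})(x)$ holds whenever the series of stem functions converges uniformly on compacts, which it does here. A secondary subtlety worth flagging explicitly is nonassociativity of $\mathbb{O}$: the expression $x^\gamma\big(\sum a_\mu b_\nu\big)$ is unambiguous because the scalar monomial $x^\gamma$ sits in $\mathbb{C}_J$ and multiplication by elements of $\mathbb{C}_J$ against $\mathbb{O}$ is associative (two of the three factors lie in an associative subalgebra generated by $J$), so no parenthesization ambiguity arises; I would add one sentence to that effect. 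Everything else is a routine matching of coefficients.
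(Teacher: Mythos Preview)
Your proposal is correct and follows essentially the same route as the paper: identify the stem functions as $F(z)=\sum_\mu z^\mu a_\mu$, $G(z)=\sum_\nu z^\nu b_\nu$, use that the complex scalars $z^\mu$ lie in the commutative associative center of $\mathbb{O}_{\mathbb{C}}$ to rewrite $FG=\sum_\gamma z^\gamma c_\gamma$ with $c_\gamma=\sum_{\mu+\nu=\gamma}a_\mu b_\nu$, and then check the monomial identity $\mathcal{I}(z^\gamma c_\gamma)(x)=x^\gamma c_\gamma$ by splitting $z^\gamma$ into its real and imaginary parts. The paper carries out exactly this computation (writing $z^\gamma=A_\gamma(z)+iB_\gamma(z)$ with real $A_\gamma,B_\gamma$); your added remarks on convergence in the power-series case and on why nonassociativity causes no trouble are welcome clarifications that the paper leaves implicit.
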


\begin{proof}
Let $f=\mathcal{I}(F),\ g=\mathcal{I}(G),\ H=FG$,  and
$$F(z)=\sum_\mu z^\mu a_\mu, \qquad G(z)=\sum_\mu z^\mu b_\mu.$$

Since $\mathbb{R} \otimes_\mathbb{R}\mathbb{C}\simeq \mathbb{C}$ is contained in the commutative and associative center of $\mathbb{O}_\mathbb{C},$ we have
\begin{equation*}
\begin{split}
H(z)=&F(z)G(z)\\=&(\sum z^\mu a_\mu)(\sum z^\nu b_\nu)\\=&\sum_{\mu,\nu}z^\mu z^\nu(a_\mu b_\nu)
\end{split}
\end{equation*}

Denote by $A_\gamma(z)$ and $B_\gamma(z)$ the real components of the complex power
$$z^\gamma=(\alpha+i \beta )^\gamma,\qquad  \alpha, \ \beta \in \mathbb{R}^n,\quad  \gamma \in \mathbb{N}^n.$$

Let $c_\gamma=\sum_{\mu+\nu=\gamma}a_\mu b_\nu$ for each $\gamma.$ Therefore, we have
\begin{equation*}
\begin{split}
H(z)=&\sum_\gamma z^\gamma c_\gamma\\=&\sum_\gamma(A_\gamma(z)+iB_\gamma(z))c_\gamma\\=&\sum_\gamma A_\gamma(z)c_\gamma+ i(\sum_\gamma B_\gamma(z)c_\gamma)\\=:&H_1(z)+iH_2(z)
\end{split}
\end{equation*}
and then, if $x=\alpha+\beta J,\ z=\alpha +i \beta,$
$$\mathcal{I}(H)(x)=H_1(z)+JH_2(z)=\sum_\gamma A_\gamma(z)c_\gamma+ J(\sum_\gamma B_\gamma(z)c_\gamma).$$

On the other hand,
\begin{eqnarray*}
f*g(x)&=&\sum_\gamma (\alpha+\beta J)^\gamma c_\gamma\\
&=&\sum_\gamma (A_\gamma(z)+JB_\gamma(z))c_\gamma
\end{eqnarray*}
since $A_\gamma$ and $B_\gamma$ are all real. From these,  the result follows.
\end{proof}

\section{Zeros of slice functions of several octonionic variables}

The zero sets of slice functions  exhibits many  interesting algebraic and topological properties.
Some relevant theories, concerning the zeros of slice functions of quternionic and octonionic variable, have been studied deeply in
\cite{Gentili2008001, Gentili2008003, Ghiloni2011005, Pogorui2004001}.

The zero set $$V(f)=\{ x\in \mathbb{O}_s^n\ \vert \ f(x)=0\}$$
 of a slice function $f\in \mathcal{S}(\Omega_D,\mathbb{O})$ has a particular structure.
 We will see that, for every fixed $x=\alpha+J\beta \in \mathcal{S}(\Omega_D),$ the ``sphere"
$$\mathbb{S}_x=\{\alpha+I\beta\ \vert  \ I\in \mathbb{S}\}$$
is entirely contained in $V(f)$ or $\mathbb S_x$ contains at most one zero of $f$.

\begin{prop}
Let $f\in \mathcal{S}(\Omega_D,\mathbb{O}).$ For $x\in \Omega_D \backslash \mathbb{R}^n,$ the restriction of $f$ to $\mathbb{S}_x$ is injective or constant.
\end{prop}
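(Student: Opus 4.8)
The plan is to fix $x = \alpha + J\beta$ with $\beta \neq 0$ (since $x \notin \mathbb{R}^n$), write $f = \mathcal{I}(F)$ with $F = F_1 + iF_2$, and compute $f$ explicitly on the sphere $\mathbb{S}_x$. Setting $z = \alpha + i\beta$, for any $I \in \mathbb{S}$ we have $f(\alpha + I\beta) = F_1(z) + I F_2(z)$, so that the restriction of $f$ to $\mathbb{S}_x$ is precisely the map $\mathbb{S} \to \mathbb{O}$, $I \mapsto a + Ib$, where $a := F_1(z)$ and $b := F_2(z)$ are fixed octonions. Thus the whole question reduces to a statement about affine maps on the sphere $\mathbb{S}$ of imaginary units in $\mathbb{O}$: such a map is either constant (precisely when $b = 0$) or injective.

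The key step is therefore to show that if $b \neq 0$, then $I \mapsto a + Ib$ is injective on $\mathbb{S}$; equivalently, that $I b = I' b$ for $I, I' \in \mathbb{S}$ forces $I = I'$. I would argue this using the norm and the (lack of) zero divisors in $\mathbb{O}$: from $Ib = I'b$ we get $(I - I')b = 0$, and since $\mathbb{O}$ is a normed division algebra with no zero divisors and $b \neq 0$, this yields $I = I'$. Here one must be slightly careful because $\mathbb{O}$ is nonassociative, but $(I-I')b = 0$ with $b \neq 0$ still forces $I - I' = 0$: multiplying on the right by $\bar b / n(b)$ and using the alternative (Moufang) identity $((I-I')b)\bar b = (I-I')(b\bar b) = (I-I')n(b)$ gives $(I-I')n(b) = 0$, hence $I = I'$. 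Alternatively, one can invoke that $\mathbb{O}$ has no zero divisors, which is standard for normed composition algebras.

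Assembling these: if $F_2(z) = 0$ then $f$ is constant equal to $F_1(z)$ on $\mathbb{S}_x$; if $F_2(z) \neq 0$ then the displayed affine map is injective, so $f|_{\mathbb{S}_x}$ is injective. This is a clean dichotomy and completes the proof. The main (minor) obstacle is handling nonassociativity when cancelling $b$ from $(I-I')b = 0$; invoking the alternative-algebra identity $(xy)\bar y = x(y\bar y)$ or directly the absence of zero divisors in $\mathbb{O}$ dispatches it. Everything else is a direct substitution into the definition \eqref{eq:slice-fun} of a slice function.
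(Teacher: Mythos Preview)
Your proof is correct and follows essentially the same approach as the paper: both arguments reduce the dichotomy to the cancellation $(I-I')b=0 \Rightarrow I=I'$ when $b\neq 0$, where your $b=F_2(z)$ is exactly the paper's spherical derivative $\partial_s f(x)$. The paper phrases it via the identity $f(x)=v_sf(x)+\mathit{Im}(x)\,\partial_sf(x)$, while you substitute the stem-function definition directly, and you are more explicit about handling nonassociativity via alternativity --- but the underlying argument is the same.
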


\begin{proof}
Given $x,{x}' \in \mathbb{S}_x,$ if $f(x)=f( {x}'),$
then $$(x-{x}')\partial_sf(x)=(\mbox{\it Im}(x)-\mbox{\it Im}({x}'))\partial_sf(x)=0.  $$ If $\partial_sf(x)\not =0,$ this implies $x={x}'.$	

 If $\partial_sf(x) =0,$ then   $\left. f \right|_{\mathbb S_x}$	is a constant due to the representation formula.
\end{proof}

This result leads to a  structure theorem of the zero of $f$ restricted to  the sphere $\mathbb S_x$.

\begin{thm}
(Structure of $V(f) $) Let $f=\mathcal{I}(F) \in \mathcal{S}(\Omega_D,\mathbb{O}).$ Let $x=\alpha +J\beta \in \Omega_D$ and $z=\alpha+i\beta \in D.$
Then one of the following mutually exclusive statements holds:

$(1)\ \mathbb{S}_x \cap V(f)= \emptyset.$

$(2)\ \mathbb{S}_x \subset V(f).$ In this case $x$ is called a real (if $x\in \mathbb{R}^n$) or spherical (if $x\not \in \mathbb{R}^n$) zereo of $f.$

$(3)\ \mathbb{S}_x \cap V(f)$ consists of a single, non-real point.\
In this case $x$ is called a non-real zero of $f$ in $\mathbb S_x$.
\end{thm}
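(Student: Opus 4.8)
The plan is to split according to whether $x$ lies in $\mathbb{R}^n$, and for non-real $x$ to combine the preceding proposition with the representation formula $f(y)=v_sf(y)+\mathrm{Im}(y)\,\partial_sf(y)$ together with the fact, recorded above, that $v_sf$ and $\partial_sf$ are constant on each sphere $\mathbb{S}_x$.

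First suppose $x\in\mathbb{R}^n$, so that $\beta=0$ and $\mathbb{S}_x=\{\alpha\}$ is a single real point. Then either $f(\alpha)=0$, which is case $(2)$ (a real zero), or $f(\alpha)\neq 0$, which is case $(1)$. Since $\mathbb{S}_x$ reduces to one real point, case $(3)$ cannot occur and cases $(1)$, $(2)$ are plainly incompatible; this disposes of the real case.

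Now suppose $x=\alpha+J\beta\in\Omega_D\setminus\mathbb{R}^n$, so $\beta\neq 0$; note $\mathbb{S}_x\subseteq\Omega_D$ because $\Omega_D$ is circular, so $f$ is defined on all of $\mathbb{S}_x$. By the preceding proposition, $f|_{\mathbb{S}_x}$ is either constant or injective. If it is constant, then $\partial_sf(x)=0$ and $f$ takes the value $v_sf(x)$ on $\mathbb{S}_x$; hence $\mathbb{S}_x\subseteq V(f)$ when $v_sf(x)=0$, which is case $(2)$ (a spherical zero), and $\mathbb{S}_x\cap V(f)=\emptyset$ when $v_sf(x)\neq 0$, which is case $(1)$. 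If $f|_{\mathbb{S}_x}$ is injective, it vanishes at most once on $\mathbb{S}_x$: either never, which is case $(1)$, or at exactly one point $y_0=\alpha+I_0\beta$ with $I_0\in\mathbb{S}$, and since $\beta\neq 0$ we have $y_0\notin\mathbb{R}^n$, so this is case $(3)$.

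Finally, mutual exclusivity for non-real $x$ is immediate: $\mathbb{S}_x$ is nonempty, so $(1)$ rules out $(2)$ and $(3)$; and since $\mathbb{S}$ — hence $\mathbb{S}_x$, as $\beta\neq 0$ — has more than one element, $(2)$ forces $\mathbb{S}_x\cap V(f)=\mathbb{S}_x$, which contains more than one point and so rules out $(3)$. I do not expect a genuine obstacle here: the only points needing attention are the collapse of $\mathbb{S}_x$ to a single point in the real case and the verification that the lone zero produced in the injective case is non-real, both of which follow at once from $\beta\neq 0$.
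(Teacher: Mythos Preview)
Your proof is correct and follows exactly the approach the paper intends: the paper does not give an explicit proof of this theorem, merely noting that ``this result leads to a structure theorem'' after the preceding proposition on injectivity versus constancy of $f|_{\mathbb{S}_x}$. You have simply filled in the details (the real/non-real split, the constant/injective dichotomy, and the mutual exclusivity check) that the paper leaves to the reader.
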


\begin{rem}
We remark that the preceding theorem shows that when restricted to  any sphere $\mathbb S_x$, the zeros of $f$ have the same behavior either $n=1$ or $n>1$.
\end{rem}

\section{Bochner-Martinelli formula  and Hartogs theorem}

The Bochner-Martinelli formula is an important formmula in several complex variables (see Theorem 1.1.4 \cite{Krantz1992001}). We now extend it to  slice functions of several octonionic variables. As an application, we shall see that there appear the Hartogs phenomena when $n>1$ in our setting.

On $\mathbb{C}_J^n$ with any  $J\in\mathbb S$, we consider the Bochner-Matinalli kernel
$$\omega _x(\xi):=\frac {(n-1)!}{(2 \pi J)^n}\sum _{j=1}^n (-1)^{j-1}\frac{\overline{\xi}_j-\overline{x}_j}{|\xi-x|^{2n}}
\overline{d\xi}_1\wedge \cdots \wedge \overline{d\xi}_{j-1}\wedge \overline{d\xi}_{j+1} \wedge \cdots \wedge
\overline{d\xi}_n \wedge d\xi.$$
Here $\xi=(x_1,\cdots,\xi_n)\in\mathbb C_J^n$ and $d\xi:=d\xi_1\wedge\cdots\wedge d\xi_n$.

\begin{thm}\label{thm-bi1}
Assume that  $f\in\mathcal{S}^1(\overline\Omega_D,\mathbb{O})$, $J\in\mathbb{
S}$, and  $D_J$ is a bounded domain with $C^1$ boundary in $\mathbb{C}_J^n$. Then for any $x\in D_J,$
$$f(x)=\int_{\partial D_J}\omega _x(\xi)f(\xi)-(-1)^n\int_{D_J} \omega _(\xi)\wedge \overline{\partial}f(\xi).$$
Moreover,\ for any \ $q\in \Omega_D$ there exists $I\in \mathbb{S}$ such that  $q\in \mathbb{C}_I^n$ and
\begin{equation*}
\begin{split}
f(q)&=\int_{\partial D_J}\frac{1}{2}(\omega _x(\xi)+\omega _{\overline x} (\xi))f(\xi)-\frac{I}{2}(J(\omega _x(\xi)+\omega _{\overline{x}} (\xi))f(\xi)) \\& -(-1)^n\int_{D_J} \frac{1}{2}(\omega _x(\xi)+\omega _{\overline x} (\xi))\wedge \overline{\partial}f(\xi)-\frac{I}{2}(J(\omega _x(\xi)+\omega _{\overline{x}} (\xi))\wedge \overline{\partial}f(\xi)). \end{split}
\end{equation*}

\end{thm}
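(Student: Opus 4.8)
The plan is to reduce both identities to the classical Bochner--Martinelli formula on a single complex slice, together with the representation formula of the Corollary to Proposition~\ref{pro-1}. Fix $J\in\mathbb S$ and identify $\mathbb C_J^n$ with $\mathbb C^n$ via $J\leftrightarrow i$; under this identification $D_J$ is a bounded domain with $C^1$ boundary in $\mathbb C^n$, the restriction $f_J:=f|_{\overline{D_J}}$ is of class $C^1$ up to the boundary, and $\omega_x$ becomes the standard Bochner--Martinelli kernel. One first verifies that the $(0,1)$-form $\overline\partial f$ appearing in the statement, restricted to $D_J$, is the Cauchy--Riemann differential of $f_J$: writing $f_J(\alpha+\beta J)=F_1(\alpha+i\beta)+JF_2(\alpha+i\beta)$ and comparing $\frac{1}{2}\bigl(\partial f_J/\partial\alpha_t+J\,\partial f_J/\partial\beta_t\bigr)$ with $\mathcal I(\partial F/\partial\overline z_t)$ (Definition~\ref{rm-2}) shows that these coincide.

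For the first identity, decompose $f_J=\sum_{k=0}^7 f_J^k e_k$ with real-valued $C^1$ functions $f_J^k$ on $\overline{D_J}$. Being real- hence $\mathbb C_J$-valued, each $f_J^k$ satisfies the scalar Bochner--Martinelli formula (\cite[Theorem~1.1.4]{Krantz1992001}):
$$f_J^k(x)=\int_{\partial D_J}\omega_x(\xi)f_J^k(\xi)-(-1)^n\int_{D_J}\omega_x(\xi)\wedge\overline\partial f_J^k(\xi).$$
Multiplying on the right by $e_k$, summing over $k$, and using that a real scalar is central in $\mathbb O$ (so $\omega_x(\xi)\bigl(f_J^k(\xi)e_k\bigr)=\bigl(\omega_x(\xi)f_J^k(\xi)\bigr)e_k$, and the constant $e_k$ passes through the $\mathbb R$-linear integral) recovers $f(x)=\int_{\partial D_J}\omega_x(\xi)f(\xi)-(-1)^n\int_{D_J}\omega_x(\xi)\wedge\overline\partial f(\xi)$. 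This is the only step in which the non-associativity of $\mathbb O$ could interfere, and it does not, because every octonionic factor that is regrouped is real.

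For the second identity, given $q\in\Omega_D$ choose $I\in\mathbb S$ and $\alpha,\beta\in\mathbb R^n$ with $q=\alpha+\beta I$ and $\alpha+i\beta\in D$, and put $x:=\alpha+\beta J$. Then $x\in D_J$ and also $\overline x=\alpha-\beta J=\alpha+\beta(-J)\in D_J$ (since $-J\in\mathbb S$ and $\mathbb C_{-J}=\mathbb C_J$), so the first identity applies to both $f(x)$ and $f(\overline x)$. Substituting these two expansions into the identity
$$f(q)=\frac{1}{2}\bigl(f(x)+f(\overline x)\bigr)-\frac{I}{2}\Bigl(J\bigl(f(x)-f(\overline x)\bigr)\Bigr)$$
(the case $K=-J$ of the Corollary to Proposition~\ref{pro-1}) and pulling the fixed octonions $I$ and $J$ inside the integrals --- legitimate because left multiplication by a fixed octonion is $\mathbb R$-linear and continuous, hence commutes with integration --- yields the asserted formula after collecting terms.

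No serious obstacle is expected: the analytic content is entirely imported from the cited scalar Bochner--Martinelli formula, and what remains is the bookkeeping of the identifications ($\mathbb C_J\cong\mathbb C$, the normalisation of $\omega_x$, the matching of $\overline\partial$ with the slice Cauchy--Riemann operator) and the two elementary reassembly steps, each needing only centrality of $\mathbb R$ in $\mathbb O$ or $\mathbb R$-linearity of one-sided octonionic multiplication. The one point requiring a little care is confirming that the sign and normalisation conventions in the definition of $\omega_x$ match those in the reference, so that the constant $(-1)^n$ comes out correctly.
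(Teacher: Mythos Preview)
Your proof is correct and follows the same overall strategy as the paper: decompose the restriction $f|_{D_J}$ into scalar components, apply the classical Bochner--Martinelli formula to each, reassemble, and then invoke the representation formula (the Corollary to Proposition~\ref{pro-1}) for general $q\in\Omega_D$.

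There is one minor but noteworthy difference in the decomposition. The paper works through the stem function: writing $F=\sum_{k=0}^7 F^k e_k$ with $F^k\in C^1(\overline D,\mathbb C)$, it transports each $F^k$ to a $\mathbb C_J$-valued function $F_J^k:=\phi_J\circ F^k\circ\phi_J^{-1}$ on $\overline{D_J}$, so that $f(x)=\sum_k F_J^k(x)e_k$, and applies the Bochner--Martinelli formula to these $\mathbb C_J$-valued components. The reassembly step then requires moving a $\mathbb C_J$-valued factor across $e_k$, i.e.\ $(\omega_x(\xi)F_J^k(\xi))e_k=\omega_x(\xi)(F_J^k(\xi)e_k)$, for which the paper explicitly invokes the alternativity of $\mathbb O$. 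Your choice to decompose $f_J$ directly into its eight \emph{real} coordinates $f_J^k$ sidesteps this: since every factor being regrouped is real, only centrality of $\mathbb R$ in $\mathbb O$ is needed, as you note. This is a mild simplification, and your argument would in fact go through verbatim in any real algebra, associative or not.
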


\begin{proof}
By defintion, for any $f\in \mathcal{S}^1(\overline\Omega_D,\mathbb{O})$   there exists $F\in C^1(D, \mathbb O_{\mathbb C})$ such that $$f=\mathcal{I}(F).$$
Foe any $z=\alpha+\beta i\in D$, we write
 \begin{equation*}
\begin{split}
F(z)&=F_1(z)+iF_2(z)\\&=\sum_{k=0}^7 F_1^k(z)e_k+iF_2^k(z)e_k
\\&=\sum_{k=0}^7 ( F_1^k(z) +iF_2^k(z))e_k \\&=\sum_{k=0}^7 F^k(z)e_k
\end{split}
\end{equation*}
where
  $\{e_0, \cdots e_7 \} $ is a basis of $\mathbb{O}$ and $$F^k=F_1^k+iF_2^k\in C^1({\overline {D}},\mathbb{C}).$$

We abuse of notation by denoting  $\phi_J$   either the isomorphism  $$\phi_J:\mathbb{C}^n \rightarrow \mathbb{C}_J^n $$
or
 the isomorphism $$\phi_j:\mathbb{C} \rightarrow \mathbb{C}_J$$ which sends  $i$ to $J$.

Define $$F_J^k:=\phi_J\circ F^k\circ \phi_J^{-1}\in C^1(\overline D_J,\mathbb{C}_J).$$
Then $$F_J^k(x)=\phi _J\circ F^k(z)=F_1^k(z)+J F_2^k(z),$$
where
$$x=\phi_J(z).$$

From this it follows that
\begin {eqnarray*}
  f(x)&=&\mathcal{I}(F)(x)\\ &=&
 F_1(z)+JF_2(z)\\&=&\sum_{k=0}^7 ( F_1^k(z) +JF_2^k(z))e_k\\&=&\sum_{k=0}^7 F_J^k(x)e_k.
 \end{eqnarray*}
Notice that $F_J^k\in C^1(\overline D_J,\mathbb{C}_J),$ \ and $D_J$ is a bounded domain with $C^1$ boundary in $C_J^n$.
By the  Bochner-Martinelli formula in the function theory of several complex variables, we obtain
$$F_J^k(x)=\int_{\partial D_J}F_J^k(\xi)\omega _x(\xi)-\int_{D_J} \overline{\partial}F_J^k(\xi)\wedge \omega _x(\xi).$$
\\A straight calculation shows that
$$F_J^k(x)=\int_{\partial D_J}\omega _x(\xi)F_J^k(\xi)-(-1)^n \int_{D_J}\omega _x(\xi)\wedge \overline{\partial}F_J^k(\xi). $$
Now we have

\begin {equation*}
\begin{split}
	f(x)&=\sum_{k=0}^7 F_J^k(x)e_k
	\\&=\int_{\partial D_J}(\omega _x(\xi)\sum_{k=0}^7 F_J^k(\xi))e_k-(-1)^n\int_{D_J} (\omega _x(\xi)\wedge \sum_{k=0}^7 \overline{\partial} F_J^k(\xi))e_k
	\\&=\int_{\partial D_J}\omega _x(\xi)(\sum_{k=0}^7 F_J^k(\xi)e_k)-(-1)^n\int_{D_J} \omega _x(\xi)\wedge (\sum_{k=0}^7 \overline{\partial} F_J^k(\xi)e_k)
	\\&=\int_{\partial D_J}\omega _x(\xi)f(\xi)-(-1)^n\int_{D_J} \omega _x(\xi)\wedge \overline{\partial}f(\xi).
\end{split}
\end{equation*}
In the third equation above, we used the alternativity of octonions .
Apply the octonionic representation formula with the function $f$ on the domain $\Omega_D$ (cf. Proposition \ref{pro-1}), we have the other formula.
\end{proof}

As a direct corollary, we get the Cauchy formula for slice regular funtions of sevral octonionic variables.

\begin{cor}\label{thm-bi2}
Let $f\in\mathcal{S}^1(\overline\Omega_D,\mathbb{O})\cap \mathcal{SR}(\Omega _D,\mathbb{O})$,\ $J\in\mathbb{S}$, and
 $D_J$ is a bounded domain with $C^1$ boundary in $\mathbb{C}_J^n$. Then for any $x\in D_J,$
$$f(x)=\int_{\partial D_J}\omega _x(\xi)f(\xi).$$ Moreover,\ for any \ $q\in \Omega_D,$
$$f(q) =\int_{\partial D_J}\frac{1}{2}(\omega _x(\xi)+\omega _{\overline x} (\xi))f(\xi)-\frac{I}{2}(J(\omega _x(\xi)+\omega _{\overline{x}} (\xi))f(\xi)),$$
where $q\in \mathbb{C}_I^n$,\ $\forall I\in \mathbb{S}.$
\end{cor}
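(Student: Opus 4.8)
The plan is to derive the Cauchy formula directly from Theorem \ref{thm-bi1} by invoking the hypothesis of slice regularity, which forces the $\bar\partial$-term to vanish. Concretely, I would start from the Bochner--Martinelli representation established in Theorem \ref{thm-bi1}: for $f=\mathcal{I}(F)\in\mathcal{S}^1(\overline\Omega_D,\mathbb{O})$ and $x\in D_J$,
\[
f(x)=\int_{\partial D_J}\omega_x(\xi)f(\xi)-(-1)^n\int_{D_J}\omega_x(\xi)\wedge\overline{\partial}f(\xi).
\]
The additional assumption here is $f\in\mathcal{SR}(\Omega_D,\mathbb{O})$, which by Definition \ref{Def1} means $F$ is holomorphic, i.e.\ $\partial F/\partial\overline{z}_t=0$ for all $t$. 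I would then unwind the componentwise decomposition used in the proof of Theorem \ref{thm-bi1}: writing $F=\sum_{k=0}^{7}F^k e_k$ with $F^k=F_1^k+iF_2^k\in C^1(\overline D,\mathbb{C})$, holomorphy of $F$ is equivalent to holomorphy of each scalar component $F^k$, hence of each transplanted component $F_J^k=\phi_J\circ F^k\circ\phi_J^{-1}\in C^1(\overline D_J,\mathbb{C}_J)$ on $D_J$. Therefore $\overline{\partial}F_J^k\equiv 0$ on $D_J$ for every $k$, and the volume integral in the representation of each $F_J^k$ drops out, leaving the classical Cauchy--Bochner--Martinelli kernel integral over $\partial D_J$ only.

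Next I would reassemble: summing $F_J^k(x)=\int_{\partial D_J}\omega_x(\xi)F_J^k(\xi)$ against $e_k$ over $k=0,\dots,7$ and using the alternativity of $\mathbb{O}$ exactly as in the proof of Theorem \ref{thm-bi1} (to pull the scalar kernel $\omega_x(\xi)$ out of the sum $\sum_k F_J^k(\xi)e_k=f(\xi)$), I obtain
\[
f(x)=\int_{\partial D_J}\omega_x(\xi)f(\xi),\qquad x\in D_J.
\]
For the second, off-slice formula, I would apply the octonionic representation formula, Proposition \ref{pro-1} (or rather its $K=-J$ specialization, the Corollary following it), to the slice function $f$: for arbitrary $q=\alpha+\beta I\in\mathbb{C}_I^n\cap\Omega_D$, $f(q)$ is expressed as an $\mathbb{R}$-affine combination of $f(\alpha+\beta J)=f(x)$ and $f(\alpha-\beta J)=f(\overline{x})$. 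Substituting the just-proved slice Cauchy formula for both $f(x)$ and $f(\overline{x})$, and noting that $\omega_{\overline x}$ is the kernel with base point $\overline x$, yields
\[
f(q)=\int_{\partial D_J}\frac{1}{2}\bigl(\omega_x(\xi)+\omega_{\overline x}(\xi)\bigr)f(\xi)-\frac{I}{2}\Bigl(J\bigl(\omega_x(\xi)+\omega_{\overline x}(\xi)\bigr)f(\xi)\Bigr),
\]
matching the stated formula.

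There is no deep obstacle here; the result is essentially a specialization of Theorem \ref{thm-bi1}. The only points that need a little care are bookkeeping points: first, ensuring that slice regularity of $f$ really does pass to holomorphy of each transplanted scalar component $F_J^k$ on $D_J$ (this is the content of the Proposition characterizing slice regularity via holomorphy of $f_J$, combined with the basis decomposition in Remark \ref{rm-1}(3)); and second, handling the non-associativity when distributing the kernel across the sum $\sum_k(\cdots)e_k$ — but this is exactly the alternativity argument already performed in the proof of Theorem \ref{thm-bi1}, since each $F_J^k(\xi)$ and $\omega_x(\xi)$ lies in the associative subalgebra $\mathbb{C}_J$, so any triple involving them and one $e_k$ associates. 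Thus the proof is a two-line deduction: kill the $\bar\partial$-integral, then invoke the representation formula.
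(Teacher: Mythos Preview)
Your proposal is correct and matches the paper's approach: the paper states this result as a direct corollary of Theorem \ref{thm-bi1} with no separate proof, so the intended argument is exactly what you outline --- slice regularity kills the $\overline\partial$-integral, and the off-slice formula then follows from the representation formula (Proposition \ref{pro-1} with $K=-J$).
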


By setting $n=1$ in Theorem \ref{thm-bi1}, we obtain the Cauchy integral formula for slice functions of
octonionic variable of class $C^1$, which was obtained by Ghiloni and Perotti\cite{Ghiloni2011002}.


Hartogs's theorem \cite{Hormander1990001} is a fundemental result in the theory of several complex variables. Now we generalize  the Hartogs Theorem to  the case of several octonionic variables.

Let $D$\ be a domain in $\mathbb{C}^n,$ $f\in \mathcal {S}(\Omega_D,\mathbb{O}).$ For any $a\in \mathbb{C}^n,$ we denote
$$D_{j,a}=\{z_j\in \mathbb{C}:(a_1, \cdots, a_{j-1}, z_j, a_{j+1}, \cdots, a_n)\in D\}$$
and
$$\Omega_{D_{j,a}}=\bigcup_{J\in\mathbb S} \phi_J(D_{j,a}).$$

We consider
  the functions on $ D_{j,a} $ $$F_{j,a}(z)=F(a_1, \cdots, a_{j-1}, z_j, a_{j+1}, \cdots, a_n)$$
and its lift  $$f_{j,a}:=\mathcal{I}(F_{j,a})$$
for any $a\in \mathbb{C}^n, \ j=1,\cdots,n,$

\begin{thm}  If for any $a\in \mathbb{C}^n, \ j=1,\cdots,n,$
$$f_{j,a}:=\mathcal{I}(F_{j,a})\in \mathcal{SR}(\Omega_{D_{j,a}},\mathbb{O}), $$
  then $f\in \mathcal{SR}(\Omega_{D}).$
\end{thm}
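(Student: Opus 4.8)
The plan is to reduce the Hartogs statement for slice regular functions to the classical Hartogs theorem in several complex variables, working through the fixed basis representation of the stem function $F$ exactly as in the proof of Theorem~\ref{thm-bi1}. Fix a basis $\{e_0,\dots,e_7\}$ of $\mathbb{O}$ and write $F = \sum_{k=0}^{7} F^k e_k$ with $F^k = F_1^k + iF_2^k : D \to \mathbb{C}$. Since each $e_k$ spans a real (hence commutative and associative) direction of $\mathbb{O}_\mathbb{C}$, and since the lift map $\mathcal{I}$ together with the operators $\partial/\partial\overline{x}_j$ act componentwise on this decomposition (cf. Definition~\ref{rm-2} and the remark following Definition~\ref{Def1}), the function $f$ is slice regular on $\Omega_D$ if and only if every scalar component $F^k$ is holomorphic on $D$ in the ordinary sense. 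So it suffices to prove: if each $F^k_{j,a}$ is holomorphic on $D_{j,a}$ for every $a \in \mathbb{C}^n$ and every $j$, then $F^k$ is holomorphic on $D$.

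The key step is to translate the hypothesis $f_{j,a} \in \mathcal{SR}(\Omega_{D_{j,a}},\mathbb{O})$ into the statement that $F^k_{j,a}$ is holomorphic. By Definition~\ref{Def1}, $f_{j,a} = \mathcal{I}(F_{j,a})$ is slice regular exactly when $F_{j,a} = F(a_1,\dots,a_{j-1},z_j,a_{j+1},\dots,a_n)$ is a holomorphic $\mathbb{O}_\mathbb{C}$-valued function of the single variable $z_j$ — but one must be careful here, because a priori the slice-theoretic hypothesis only constrains $F$ along the "complex directions" and only at points $a \in \mathbb{C}^n$ (not at all of $\mathbb{O}_s^n$). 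However, since $D \subseteq \mathbb{C}^n$ is an open set in $\mathbb{C}^n$ and the stem function $F$ is by definition a genuine $\mathbb{O}_\mathbb{C}$-valued function on $D \subseteq \mathbb{C}^n$, the restriction $F_{j,a}$ is literally a function of one complex variable $z_j$ on the open set $D_{j,a} \subseteq \mathbb{C}$, and $\mathcal{I}(F_{j,a})$ being slice regular is equivalent to $\partial F_{j,a}/\partial \overline{z}_j = 0$ on $D_{j,a}$, i.e. to $\partial F^k_{j,a}/\partial\overline{z}_j = 0$ for each $k$. Thus each scalar function $F^k : D \to \mathbb{C}$ is separately holomorphic in each of its $n$ complex variables.

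At this point classical Hartogs's theorem on separate analyticity (Hörmander, \cite{Hormander1990001}) applies directly to each $F^k : D \to \mathbb{C}$: a function on an open set in $\mathbb{C}^n$ that is holomorphic in each variable separately is jointly holomorphic, hence in particular $C^1$ (indeed $C^\infty$) and satisfies $\partial F^k/\partial\overline{z}_t = 0$ for all $t = 1,\dots,n$. Reassembling, $F = \sum_k F^k e_k \in C^1(D,\mathbb{O}_\mathbb{C})$ with $\partial F/\partial\overline{z}_t = 0$ for every $t$, so $F \in \mathcal{H}(D,\mathbb{O}_\mathbb{C})$; moreover $F$ is complex intrinsic since this was assumed of the original stem function $F$ (the hypothesis $f \in \mathcal{S}(\Omega_D,\mathbb{O})$ presupposes $F$ is an $\mathbb{O}$-stem function). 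Therefore $f = \mathcal{I}(F) \in \mathcal{SR}(\Omega_D,\mathbb{O})$, which is the claim.

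The main obstacle, and the only genuinely delicate point, is the bookkeeping that shows the slice-regularity hypotheses on the one-variable restrictions $f_{j,a}$ are equivalent to ordinary separate holomorphy of the scalar stem components $F^k$ — i.e. verifying that the slice construction does not introduce any obstruction to invoking the classical theorem, and in particular that the $C^1$ regularity required in Definition~\ref{Def1} is recovered a posteriori from the classical Hartogs conclusion rather than needed as an extra hypothesis. Everything else is a componentwise transfer of the classical result through the basis decomposition, exploiting the alternativity of $\mathbb{O}$ and the linearity of $\mathcal{I}$, precisely as already done in the proof of the Bochner–Martinelli formula above.
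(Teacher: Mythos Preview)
Your proposal is correct and follows essentially the same route as the paper: reduce the hypothesis $f_{j,a}\in\mathcal{SR}(\Omega_{D_{j,a}},\mathbb{O})$ to separate holomorphy of the stem function $F$, invoke the classical Hartogs theorem on separate analyticity, and conclude $F\in\mathcal{H}(D,\mathbb{O}_\mathbb{C})$ so that $f\in\mathcal{SR}(\Omega_D,\mathbb{O})$. The only difference is presentational: the paper applies Hartogs directly to the $\mathbb{O}_\mathbb{C}$-valued function $F$ in one line, whereas you unpack $F=\sum_k F^k e_k$ into scalar components and apply Hartogs to each $F^k$ separately---this is exactly the justification the paper's terse argument tacitly relies on, and your remark that the $C^1$ regularity is recovered \emph{a posteriori} from the classical conclusion is a genuine clarification the paper omits.
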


\begin{proof}
If  $f_{j,a}=\mathcal{I}(F_{j,a})\in \mathcal{SR}(\Omega_{D_j,a},\mathbb{O}),$ which is well-defined (see remark \ref{rm-1}).
From Definition \ref{Def1}, we have $$F_{j,a}\in \mathcal H(D_{j,a},\mathbb{O}_{\mathbb{C}}).$$
This means  that $$F\in \mathcal H(D,\mathbb{O}_{\mathbb{C}})$$ by the Hartogs Theorem for the holomorphic
functions of several complex variables. Hence $f\in \mathcal{SR}(\Omega_{D},\mathbb{O}) $ by definiton.	
\end{proof}

\begin{thm}$(Hartogs)$ Assume that $n\ge2$.
Let $D\subset \mathbb{C}^n$ be a domain, \ $K\subset D$\ be a compact set such that $D\backslash K$ is
connected in $\mathbb{C}^n$. If $f\in \mathcal{SR}(\Omega_ {D\backslash K},\mathbb{O}),$\ then there is a function
$g\in \mathcal{SR}(\Omega_D,\mathbb{O}), $\ such that $g\vert _{\Omega_ {D\backslash K}}=f.$
\end{thm}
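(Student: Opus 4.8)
The plan is to reduce the octonionic Hartogs extension statement to the classical Hartogs extension theorem in several complex variables, exactly in the spirit of the two preceding proofs in this section. Let $f\in\mathcal{SR}(\Omega_{D\setminus K},\mathbb O)$, so $f=\mathcal I(F)$ for a holomorphic $\mathbb O$-stem function $F\in\mathcal H(D',\mathbb O_{\mathbb C})$, where $D'\subset\mathbb C^n$ is the open set whose associated circular set is $\Omega_{D\setminus K}$. First I would clarify the complex-geometric setup: since $K$ is compact in $D$ and $D\setminus K$ is connected, the open set $D\setminus K$ in $\mathbb C^n$ is precisely the set on which the stem function is defined, and I would check that the complex-conjugation-invariance hypothesis in Definition \ref{B} is preserved (one may, if necessary, replace $K$ by $K\cup\mathrm{conj}(K)$, which is still compact, and $D$ is already a domain; connectedness of the complement is used here to keep the extended stem function single-valued). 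The upshot is that $F$ is a holomorphic function on a domain of the form $D'\setminus K'$ with $D'\subset\mathbb C^n$ a domain, $K'$ compact, $D'\setminus K'$ connected, and $F$ taking values in the complex vector space $\mathbb O_{\mathbb C}\cong\mathbb C^8$.

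The key step is to apply the classical Hartogs extension theorem (the Kugelsatz, as in \cite{Hormander1990001}) componentwise. Writing $F=\sum_{k=0}^{7}F^k e_k$ with $F^k\in\mathcal O(D'\setminus K',\mathbb C)$ as in the proof of Theorem \ref{thm-bi1}, each scalar holomorphic function $F^k$ extends uniquely to a holomorphic function $\widetilde{F}^k\in\mathcal O(D',\mathbb C)$ because $n\ge 2$, $D'$ is a domain and $D'\setminus K'$ is connected. Set $\widetilde F:=\sum_{k=0}^{7}\widetilde F^k e_k\in\mathcal O(D',\mathbb O_{\mathbb C})$. Then $\widetilde F$ is holomorphic for the left-multiplication-by-$i$ complex structure on $\mathbb O_{\mathbb C}$, i.e. $\partial\widetilde F/\partial\bar z_t=0$ for all $t$, since this is a condition on the components $\widetilde F^k$; hence $\widetilde F\in\mathcal H(D',\mathbb O_{\mathbb C})$.

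It remains to verify that $\widetilde F$ is still an $\mathbb O$-stem function, i.e. complex intrinsic: $\widetilde F(\bar z)=\overline{\widetilde F(z)}$ on $D'$. This follows from uniqueness of holomorphic extension — $\overline{\widetilde F(\bar z)}$ is holomorphic on $D'$ (conjugation composed with the antiholomorphic map $z\mapsto\bar z$ is holomorphic) and agrees with $F$ on $D'\setminus K'$ by the complex intrinsicity of $F$ there, so the two holomorphic extensions coincide on the connected set $D'$. Therefore $\widetilde F\in\mathcal H(D',\mathbb O_{\mathbb C})\cap\mathfrak S(D',\Omega)$, and $g:=\mathcal I(\widetilde F)\in\mathcal{SR}(\Omega_{D'},\mathbb O)=\mathcal{SR}(\Omega_D,\mathbb O)$ by Definition \ref{Def1}. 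Finally $g|_{\Omega_{D\setminus K}}=\mathcal I(\widetilde F|_{D'\setminus K'})=\mathcal I(F)=f$, because $\mathcal I$ is a bijection compatible with restriction to circular subsets. The main obstacle is purely bookkeeping: making sure the passage from the octonionic circular sets $\Omega_D$, $\Omega_{D\setminus K}$ to the underlying complex domains $D'$, $D'\setminus K'$ correctly matches the compactness-of-$K$ and connectedness-of-$D\setminus K$ hypotheses onto the hypotheses required by the classical Kugelsatz, since the slice construction only "sees" the stem-function level; once that dictionary is fixed, the analytic content is entirely classical and the octonionic nonassociativity plays no role, as the extension is built scalar-component-wise.
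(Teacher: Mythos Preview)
Your proposal is correct and follows essentially the same route as the paper: decompose the stem function into its eight scalar complex components, apply the classical Hartogs extension theorem to each, reassemble into a holomorphic $\mathbb O_{\mathbb C}$-valued function, and lift via $\mathcal I$. You are in fact more thorough than the paper in explicitly verifying that the extension remains complex intrinsic (hence a genuine stem function) via uniqueness of analytic continuation --- the paper's proof omits this check --- while your bookkeeping with auxiliary domains $D'$, $K'$ is unnecessary, since in the paper's framework the stem function is already defined on $D\setminus K$ itself.
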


\begin{proof}
For any  $f\in \mathcal{SR}(\Omega_ {D\backslash K}, \mathbb O),$\ by definition there exists a function $ F\in \mathcal{H}(D\backslash K, \mathbb{O}_\mathbb{C}),$\ such that $f=\mathcal{I}(F).$
Notice that $$F(z)=\sum _{k=0}^7 F^k(z)e_k,$$ here $\{e_0,\cdots,e_7 \} $ is a basis of $\mathbb{O}$  and $$F^k\in \mathcal{H}(D\backslash K,\mathbb{C}).$$\
By the classical Hartogs theorem for several complex variables, there is a function $G^k\in \mathcal{H}(D,\mathbb{C})$\ such that $G^k\vert _{D\backslash K}=F^k.$
Now set $$G(z):=\sum _{k=0}^7 G^k(z)e_k,$$ then $ G\in \mathcal{H}(D, \mathbb{O}_\mathbb{C})$ so that $g=\mathcal{I}(G),$\
is the desired function.
\end{proof}

\section{Slice functions of several quaternionic variables }

In the previous sections, we consider the slice theory of several octonionic variables. The similar theory holds with the octonions replaced by the quaternions. In this section, the results are stated without proof.

Let $\mathbb{H}_{\mathbb{C}}=\mathbb{H}\otimes_{\mathbb{R}}{\mathbb{C}}$ be the complexification of $\mathbb{H}$, i.e.,
$$\mathbb{H}_{\mathbb{C}}=\{ \omega =x+iy\ \vert \ x,y \in  \mathbb{H}\} \\\    \qquad ( i^2=-1).$$
$\mathbb{H}_{\mathbb{C}}$ is a complex alternative algebra with a unity w.r.t. the product given by the formula $$(x+iy)(u+iv)=xu-yv+i(xv+yu).$$

Since $\mathbb{H}_{\mathbb{C}}=\mathbb{H}+i \mathbb{H},$ in $\mathbb{H}_{\mathbb{C}}$ two commuting operators are defined: the complex-linear antiinvolution $$\omega \mapsto \omega^c=(x+iy)^c=\overline x +i\overline y $$ and the complex conjungation defined by $$\overline \omega=\overline{x+iy}=x-iy.$$

\begin{defn}
Let $D\subset\mathbb{C}^n$ be an open subset. A function $F:D\rightarrow\mathbb{H}_{\mathbb{C}}$ is called a $\mathbb{H}$-stem function, if $F$ is complex intrinsic, i.e. $F(\bar{z})=\overline{F(z)}$, for each $z\in D$.
\end{defn}

\begin{rem}
$(1)F=F_1+iF_2$ is a $\mathbb{H}$-stem function if and only if $F_1,F_2$ form an even-odd pair,
i.e. for each $z\in D,$ we have $F_1(\overline{z})=F_1(z)$, and $F_2(\overline{z})=-F_2(z).$

$(2)$ Consider $\mathbb{H}$ as a 4-dimensional real vector space. By means of a basis $\mathcal{B}=\{e_0,e_1,e_2,e_3\}$ of $\mathbb{H},$ $F$ can be identity with a complex intrinsic surface in $\mathbb{C}^n.$

Let $F(z)=F_1(z)+iF_2(z)=\sum_{k=0}^3 F^k(z)e_k$ with $F^k(z)\in \mathbb{C}.$ Then$$\tilde F=(F^0,F^1,F^2,F^3):D\rightarrow \mathbb{C}^4 $$ satisfies $\tilde F(\bar z)= \overline {\tilde  F(z)}.$ Giving $\mathbb{H}$ the unique manifold structure as a real vector space, we get that a stem function $F$ is of class $C^k(k=0,\cdots,\infty)$ or real-analytic if and only if the same property for $\tilde F.$ This notion is clearly independent of the choice of the basis of $\mathbb{H}.$
\end{rem}

Given an open subset $D$ of $\mathbb{C}^n,$ let $\Omega_D$ be the subset of $\mathbb{H}_s^n$ obtained by the action on $D$ of the square roots of $-1:$
$$\Omega_D:=\{x=\alpha+\beta J\in \mathbb{C}_J^n \ |\  \alpha +i \beta \in D, \alpha ,\beta \in \mathbb{R}^n, \ J\in \mathbb{S}_{\mathbb{H}}\}\subset \mathbb{H}_s^n$$
and
$$D_J=D\cap \mathbb{C}_J^n,$$
where $$\mathbb{S}_{\mathbb{H}}=\{a\in \mathbb{H}\ | \ a^2=-1\}, \qquad \mathbb{H}_s^n=\bigcup_{\,J \in\mathbb{S}_{\mathbb{H}}} \mathbb{C}_J^n.$$
Sets of this type will be called circular sets in $\mathbb{H}_s^n,$ which is an open subset of $\mathbb{H}_s^n.$

\begin{defn}
Any stem function $F:D\rightarrow \mathbb{H}_\mathbb{C}$ induces a (left) slice function $f=\mathcal{I}(F):\Omega_D \rightarrow \mathbb{H}.$ If $x=\alpha+J\beta \in D_J,$ we set $$f(x):=F_1 (z)+JF_2 (z) \qquad  (z=\alpha + i \beta )$$
\end{defn}

We will denote the set of (left) slice function on $\Omega_D$ by
$$\mathcal{S}(\Omega_D,\mathbb{H}):=\{f:\Omega_D\rightarrow \mathbb{H}\ |\ f=\mathcal{I}(F),\ F:D\rightarrow \mathbb{H}_\mathbb{C}\ \text{is\ a}\ \mathbb{H}\text{-stem\ function}\},$$
and
$$\mathcal{S}^1(\Omega_D,\mathbb{H}):=\{f=\mathcal{I}(F)\in \mathcal{S}(\Omega_D,\mathbb{H})\ | \ F\in C^1(D,\mathbb{H}_\mathbb{C}),$$

$$\mathcal{SR}(\Omega_D,\mathbb{H}):=\{f=\mathcal{I}(F)\in \mathcal{S}(\Omega_D,\mathbb{H})\ | \ F:D\rightarrow \mathbb{H}_\mathbb{C}\ \text{is\ holomorphic} \}.$$

As in the section above, we let
$$\omega _x(\xi):=\frac {(n-1)!}{(2 \pi J)^n}\sum _{j=1}^n (-1)^{j-1}\frac{\overline{\xi}_j-\overline{x}_j}{|\xi-x|^{2n}}\overline{d\xi}_1\wedge \cdots \wedge \overline{d\xi}_{j-1}\wedge \overline{d\xi}_{j+1} \wedge \cdots \wedge \overline{d\xi}_n \wedge d\xi$$
on $\mathbb{C}_J^n.$

\begin{thm}\label{thm-bi3}
Let $f\in\mathcal{S}^1(\overline\Omega_D,\mathbb{H})$,\ $J\in\mathbb{S}_{\mathbb{H}}$.\
Assume that $D_J$ be a bounded domain with $C^1$ boundary in $\mathbb{C}_J^n$, \ then for any $x\in D_J,$
$$f(x)=\int_{\partial D_J}\omega _x(\xi)f(\xi)-(-1)^n\int_{D_J} \omega _x(\xi)\wedge \overline{\partial}f(\xi).$$
Moreover,\ for any \ $q\in \Omega_D,$
\begin{equation*}
\begin{split}
f(q)=\int_{\partial D_J} \omega _q(\xi)f(\xi))-(-1)^n\int_{D_J} \omega _q(\xi)\wedge \overline{\partial}f(\xi))
\end{split}
\end{equation*}
where $$\omega _q(\xi)= \frac{1}{2}(\omega _x(\xi)+\omega _{\overline x} (\xi)) -\frac{I}{2}(J(\omega _x(\xi)+\omega _{\overline{x}} (\xi)), \qquad
q\in \mathbb{C}_I^n,  \quad I\in \mathbb{S}_{\mathbb{H}}.$$
\end{thm}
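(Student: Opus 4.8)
The plan is to mimic the proof of Theorem~\ref{thm-bi1} almost verbatim, using only that $\mathbb{H}$ is associative, a property which here replaces (and is stronger than) the alternativity of $\mathbb{O}$ invoked there. First I would write $f=\mathcal{I}(F)$ with $F\in C^1(\overline D,\mathbb{H}_{\mathbb C})$, fix a real basis $\{e_0,e_1,e_2,e_3\}$ of $\mathbb{H}$, and decompose
\[
F(z)=\sum_{k=0}^{3}F^k(z)\,e_k,\qquad F^k=F_1^k+iF_2^k\in C^1(\overline D,\mathbb{C}).
\]
Using the $\mathbb R$-algebra isomorphism $\phi_J\colon\mathbb{C}\to\mathbb{C}_J\subset\mathbb{H}$ sending $i\mapsto J$, together with its $n$-fold version $\phi_J\colon\mathbb{C}^n\to\mathbb{C}_J^n$, I would set $F_J^k:=\phi_J\circ F^k\circ\phi_J^{-1}\in C^1(\overline{D_J},\mathbb{C}_J)$, so that $F_J^k(x)=F_1^k(z)+JF_2^k(z)$ when $x=\phi_J(z)$, and hence $f(x)=F_1(z)+JF_2(z)=\sum_{k=0}^{3}F_J^k(x)e_k$ for $x\in D_J$.

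Next, each $F_J^k$ is, after transport through $\phi_J^{-1}$, an ordinary $\mathbb{C}$-valued $C^1$ function on the bounded $C^1$ domain $\phi_J^{-1}(D_J)\subset\mathbb{C}^n$; under this transport $\overline\partial$ on $\mathbb{C}_J^n$ corresponds to the usual Cauchy--Riemann operator and $\omega_x$ to the standard Bochner--Martinelli kernel. Hence the classical Bochner--Martinelli formula of several complex variables applies componentwise and yields
\[
F_J^k(x)=\int_{\partial D_J}\omega_x(\xi)F_J^k(\xi)-(-1)^n\int_{D_J}\omega_x(\xi)\wedge\overline\partial F_J^k(\xi),\qquad x\in D_J.
\]
Multiplying on the right by $e_k$, summing over $k=0,\dots,3$, and using that the coefficients of $\omega_x$ lie in $\mathbb{C}_J$ together with the associativity of $\mathbb{H}$ to factor $\omega_x(\xi)$ out of the finite sum, I obtain the first claimed identity $f(x)=\int_{\partial D_J}\omega_x(\xi)f(\xi)-(-1)^n\int_{D_J}\omega_x(\xi)\wedge\overline\partial f(\xi)$.

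For the representation valid at an arbitrary $q=\alpha+\beta I\in\Omega_D$, I would apply the quaternionic analogue of the representation formula (Proposition~\ref{pro-1}) in its $K=-J$ form; this writes $f(q)$ as the $\mathbb{C}_J$-coefficient combination
\[
f(q)=\tfrac12\bigl(f(x)+f(\overline x)\bigr)-\tfrac12\,I\bigl(J(f(x)-f(\overline x))\bigr)
\]
of the values of $f$ at $x=\alpha+\beta J$ and $\overline x=\alpha-\beta J$, both of which lie in $D_J$ since $\mathbb{C}_J=\mathbb{C}_{-J}$. Substituting the first identity for $f(x)$ and for $f(\overline x)$ (with kernels $\omega_x$ and $\omega_{\overline x}$ respectively) and then using associativity of $\mathbb{H}$ to absorb the scalar factors $\tfrac12$ and $\tfrac12\,IJ$ into a single $\mathbb{C}_J$-valued kernel $\omega_q$ gives the asserted formula $f(q)=\int_{\partial D_J}\omega_q(\xi)f(\xi)-(-1)^n\int_{D_J}\omega_q(\xi)\wedge\overline\partial f(\xi)$.

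I do not expect any genuine analytic obstacle: the analytic substance is entirely the classical several-complex-variables Bochner--Martinelli formula, which may be cited. The only steps demanding care are bookkeeping ones: verifying that $\phi_J$ intertwines the Cauchy--Riemann operator and the standard kernel on $\mathbb{C}^n$ with $\overline\partial$ and $\omega_x$ on $\mathbb{C}_J^n$, and justifying the interchange of left multiplication by the $\mathbb{C}_J$-valued kernel with the finite sum over the basis. In Theorem~\ref{thm-bi1} the latter step required the alternativity of $\mathbb{O}$; in the present associative setting it is immediate, so the proof is essentially a transcription of the octonionic one.
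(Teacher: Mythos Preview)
Your proposal is correct and follows exactly the approach the paper intends: Section~7 explicitly states that its results are ``stated without proof'' because the arguments are the quaternionic transcriptions of the octonionic ones, and you have carried out precisely that transcription of the proof of Theorem~\ref{thm-bi1}. Your observation that associativity of $\mathbb{H}$ both replaces the alternativity step and, additionally, permits collecting the representation-formula coefficients into the single kernel $\omega_q$ is exactly the point that distinguishes the quaternionic statement from the octonionic one.
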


\begin{thm}\label{thm-bi4}
Let $f\in\mathcal{S}^1(\overline\Omega_D,\mathbb{H})\cap \mathcal{SR}(\Omega _D,\mathbb{H})$,\ $J\in\mathbb{
S}$.\ Assume that $D_J$ be a bounded domain with $C^1$ boundary in $\mathbb{C}_J^n$, \ then for any $x\in D_J,$
$$f(x)=\int_{\partial D_J}\omega _x(\xi)f(\xi) .$$
Moreover,\ for any \ $q\in \Omega_D,$
$$f(q) =\int_{\partial D_J}\omega _q(\xi)f(\xi),$$
where $$\omega _q(\xi)= \frac{1}{2}(\omega _x(\xi)+\omega _{\overline x} (\xi)) -\frac{I}{2}(J(\omega _x(\xi)+
\omega _{\overline{x}} (\xi)),  \qquad\ \forall\ q\in \mathbb{C}_I^n, \quad  I\in \mathbb{S}_{\mathbb{H}}.$$
\end{thm}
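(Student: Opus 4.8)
The plan is to obtain Theorem~\ref{thm-bi4} from Theorem~\ref{thm-bi3} in the same way that Corollary~\ref{thm-bi2} follows from Theorem~\ref{thm-bi1} in the octonionic setting. First I would unwind the hypothesis $f\in\mathcal{SR}(\Omega_D,\mathbb H)$: by Definition~\ref{Def1} the associated stem function $F$ is holomorphic, so $\partial F/\partial\overline z_t=0$ for $t=1,\dots,n$, and applying the lift $\mathcal I$ this says precisely that the slice map $\overline\partial f=(\partial f/\partial\overline x_1,\dots,\partial f/\partial\overline x_n)$ vanishes identically on $\Omega_D$. Consequently the volume term $(-1)^n\int_{D_J}\omega_x(\xi)\wedge\overline\partial f(\xi)$ in the Bochner--Martinelli formula of Theorem~\ref{thm-bi3} is identically zero, leaving $f(x)=\int_{\partial D_J}\omega_x(\xi)f(\xi)$ for every $x\in D_J$; the global formula for $q\in\Omega_D$ is obtained from the same cancellation in the representation-formula version of Theorem~\ref{thm-bi3}.

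For a self-contained argument I would instead rerun the componentwise computation from the proof of Theorem~\ref{thm-bi1}. Writing $F=\sum_{k=0}^{3}F^k e_k$ with each $F^k$ a holomorphic $\mathbb C$-valued function on $D$, transport it to $F_J^k:=\phi_J\circ F^k\circ\phi_J^{-1}$, which is holomorphic on $D_J$ and of class $C^1$ up to the boundary; since $D_J$ is a bounded $C^1$ domain in $\mathbb C_J^n$, the classical Cauchy formula of several complex variables (the Bochner--Martinelli formula with vanishing $\overline\partial$ term) yields $F_J^k(x)=\int_{\partial D_J}\omega_x(\xi)F_J^k(\xi)$. Summing against the basis $\{e_0,\dots,e_3\}$ and using the associativity of $\mathbb H$ (here even simpler than the alternativity invoked in the proof of Theorem~\ref{thm-bi1}) recombines the four integrals into $f(x)=\sum_{k=0}^{3}F_J^k(x)e_k=\int_{\partial D_J}\omega_x(\xi)f(\xi)$.

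It remains to pass from $x\in D_J$ to an arbitrary $q\in\Omega_D$. Here I would invoke the representation formula for slice functions, i.e.\ the quaternionic counterpart of Proposition~\ref{pro-1} specialized as in its Corollary with $K=-J$: writing $q=\alpha+\beta I\in\mathbb C_I^n$ and $x=\alpha+\beta J$, one has $f(q)=\frac{1}{2}\big(f(x)+f(\overline x)\big)-\frac{I}{2}\big(J\,(f(x)-f(\overline x))\big)$, and inserting the Cauchy representations of $f(x)$ and $f(\overline x)$ over $\partial D_J$ and pulling the real scalars and the units $I,J$ inside the integral collects everything into $f(q)=\int_{\partial D_J}\omega_q(\xi)f(\xi)$ with $\omega_q$ as in the statement. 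The only point that needs care --- and the closest thing to an obstacle --- is this last recombination: one must verify that the kernels $\omega_x$ and $\omega_{\overline x}$ assemble into exactly $\frac{1}{2}(\omega_x+\omega_{\overline x})-\frac{I}{2}(J(\omega_x+\omega_{\overline x}))$, which works because $\mathbb R$ lies in the center of $\mathbb H_{\mathbb C}$ and $\omega_x$ is $\mathbb C_J$-valued, so the sole genuine noncommutativity is the outer left multiplication by $I$ and $J$ that the representation formula is built to handle. No analytic ingredient beyond Theorem~\ref{thm-bi3} is required.
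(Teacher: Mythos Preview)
Your proposal is correct and is exactly the route the paper intends: Section~7 explicitly states the quaternionic results without proof, and Theorem~\ref{thm-bi4} is meant to follow from Theorem~\ref{thm-bi3} in precisely the way Corollary~\ref{thm-bi2} follows from Theorem~\ref{thm-bi1}, namely by observing that $\overline\partial f\equiv 0$ for $f\in\mathcal{SR}$ kills the volume integral and then applying the representation formula. One small inconsistency to flag: the representation formula you (correctly) quote has $f(x)-f(\overline x)$ in the second term, so the assembled kernel should be $\tfrac{1}{2}(\omega_x+\omega_{\overline x})-\tfrac{I}{2}J(\omega_x-\omega_{\overline x})$; the ``$+$'' in the second parenthesis of the stated $\omega_q$ is a typo in the paper that you have carried over.
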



	

Let $D$\ be a domain in $\mathbb{C}^n$. For any $a=(a_1,a_2,\cdots,a_n)\in D$ and $j=1,\ldots, n$, we consider the slices of $D$ defined by
$$D_{j,a}=\{u\in \mathbb{C}:(a_1, \cdots, a_{j-1}, u, a_{j+1}, \cdots, a_n)\in D\}\subset \mathbb C$$
and the related sets
$$\Omega_{D_{j,a}}=\{\alpha_j+\beta_j J\in\mathbb H:  \alpha+i\beta\in D, \alpha,\beta\in\mathbb R^n, J\in\mathbb S_{\mathbb H}\}\subset \mathbb H$$
Here we denote
$\alpha=(\alpha_1,\ldots,\alpha_n)$, $\beta=(\beta_1, \ldots, \beta_n).$

Associated with the functions of $f\in \mathcal{S}(\Omega_{D},\mathbb{H})$ and $F\in \mathfrak{S}(\Omega_{D},\mathbb{H}_{\mathbb C})$, we consider their slices
\begin{eqnarray*} F_{j,a}(u)&=&F(a_1, \cdots, a_{j-1}, u, a_{j+1}, \cdots, a_n), \qquad \forall\ u\in  D_{j,a};
\\
f_{j,a}(x_j)&=&f(a_1, \cdots, a_{j-1}, x_j, a_{j+1}, \cdots, a_n), \qquad \forall\ x_j\in \Omega_{D_{j,a}}\subset \mathbb H.
\end{eqnarray*}

It is easy to see that if $f=\mathcal I(F)$ then
$$f_{j,a}:=\mathcal{I}(F_{j,a}).$$
Moreover, if  $f_{j,a}:=\mathcal{I}(F_{j,a})\in \mathcal{SR}(\Omega_{D_{j,a}},\mathbb{H})$ \ for any $a\in D\subset \mathbb{C}^n, \ j=1,\cdots,n,$ then $$f\in \mathcal{SR}(\Omega_{D},\mathbb{H}).$$

\begin{thm}
Let $D$\ be a domain in $\mathbb{C}^n$ and $f\in \mathcal {S}(\Omega_D,\mathbb{H}).$
If for any $a\in D\subset \mathbb{C}^n$ and $j=1,\cdots,n,$
$$f_{j,a}\in \mathcal{SR}(\Omega_{D_{j,a}},\mathbb{H}),$$ \ then $$f\in \mathcal{SR}(\Omega_{D},\mathbb{H}).$$
\end{thm}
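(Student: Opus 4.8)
The plan is to mirror the argument used for the octonionic Hartogs-type theorem proved above, reducing the statement to the classical Hartogs theorem on separate holomorphy for functions of several complex variables. First I would lift $f$ to the level of stem functions: since $f\in\mathcal{S}(\Omega_D,\mathbb{H})$, there is an $\mathbb{H}$-stem function $F:D\to\mathbb{H}_{\mathbb{C}}$ with $f=\mathcal{I}(F)$, and by the compatibility of the lift $\mathcal{I}$ with the operation of taking slices one has $f_{j,a}=\mathcal{I}(F_{j,a})$ for every $a\in D$ and every $j=1,\dots,n$ (this identity is recorded just before the statement).

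Next, the hypothesis $f_{j,a}\in\mathcal{SR}(\Omega_{D_{j,a}},\mathbb{H})$ together with the definition of slice regularity forces $F_{j,a}\in\mathcal{H}(D_{j,a},\mathbb{H}_{\mathbb{C}})$, i.e. $F_{j,a}$ is holomorphic as a function of the single complex variable $u\in D_{j,a}$. Since this holds for all $a\in D$, and the affine slices obtained by fixing $(a_1,\dots,a_{j-1},a_{j+1},\dots,a_n)$ exhaust $D$ as $a$ varies, the function $F$ is separately holomorphic in each of its $n$ complex variables on $D$. Writing $F=\sum_{k=0}^{3}F^k e_k$ with respect to a basis $\{e_0,e_1,e_2,e_3\}$ of $\mathbb{H}$, with $F^k:D\to\mathbb{C}$, each scalar component $F^k$ is itself separately holomorphic on $D$; here I use that $\mathbb{C}\cong\mathbb{R}\otimes_{\mathbb{R}}\mathbb{C}$ lies in the associative, commutative center of $\mathbb{H}_{\mathbb{C}}$, so the componentwise decomposition commutes with the Cauchy–Riemann operators.

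Then I would invoke the classical Hartogs theorem on separate analyticity: a function on a domain in $\mathbb{C}^n$ that is holomorphic in each variable separately is holomorphic. Applying it to each $F^k$ gives $F^k\in\mathcal{H}(D,\mathbb{C})$, whence $\overline{\partial}F^k=0$ on $D$ for every $k$, and therefore $F=\sum_{k=0}^{3}F^k e_k\in\mathcal{H}(D,\mathbb{H}_{\mathbb{C}})$. By the definition of slice regularity this yields $f=\mathcal{I}(F)\in\mathcal{SR}(\Omega_D,\mathbb{H})$, as claimed. The only point that requires genuine care is the passage from ``slice regular on every one-variable affine slice'' to ``separately holomorphic componentwise'': one must check that the hypothesis is used over the full parameter range $a\in D$, so that the slices actually cover $D$, and that no a priori joint regularity of $F$ is assumed — which is precisely why the strong form of Hartogs' theorem (separate holomorphy with no continuity hypothesis) is the appropriate tool rather than the elementary version. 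Everything else is routine bookkeeping with the lift $\mathcal{I}$ and the basis decomposition.
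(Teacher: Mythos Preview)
Your proposal is correct and follows essentially the same route as the paper: the quaternionic theorem is stated without proof, but the analogous octonionic result is proved by exactly the reduction you describe---pass to the stem function, use the definition of slice regularity to conclude that each one-variable restriction $F_{j,a}$ is holomorphic, invoke the classical Hartogs theorem on separate holomorphy to get $F\in\mathcal{H}(D,\mathbb{H}_{\mathbb{C}})$, and lift back. Your explicit basis decomposition $F=\sum_{k=0}^{3}F^k e_k$ makes the application of the scalar-valued Hartogs theorem more transparent than the paper's terse appeal, but there is no substantive difference in strategy.
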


Now we can state  the  Hartogs theorem in the version of  several quaternionic variables.

\begin{thm}
Assume that $n\ge2$.
Let $D\subset \mathbb{C}^n$ be a domain, \ $K\subset D$\ be a compact set such that $D\backslash K$ is
connected in $\mathbb{C}^n$. If $f\in \mathcal{SR}(\Omega_ {D\backslash K},\mathbb{H}),$\ then there is a function
$g\in \mathcal{SR}(\Omega_D,\mathbb{H}), $\ such that $g\vert _{\Omega_ {D\backslash K}}=f.$
\end{thm}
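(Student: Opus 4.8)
The plan is to reduce the quaternionic Hartogs extension theorem to the classical Hartogs theorem for several complex variables, exactly as was done in the octonionic case earlier in the paper. The key observation is that a slice regular function on a circular set $\Omega_{D\setminus K}$ is, by definition, of the form $f = \mathcal{I}(F)$ for a holomorphic stem function $F : D\setminus K \to \mathbb{H}_{\mathbb{C}}$, and holomorphy of $F$ is a componentwise condition once we fix a real basis $\mathcal{B} = \{e_0,e_1,e_2,e_3\}$ of $\mathbb{H}$.

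First I would take $f \in \mathcal{SR}(\Omega_{D\setminus K},\mathbb{H})$ and write $f = \mathcal{I}(F)$ with $F \in \mathcal{H}(D\setminus K,\mathbb{H}_{\mathbb{C}})$. Expanding in the basis, $F(z) = \sum_{k=0}^{3} F^k(z) e_k$ with $F^k : D\setminus K \to \mathbb{C}$, and the holomorphy of $F$ (i.e. $\partial F/\partial \overline{z}_t = 0$ for all $t$) is equivalent, by the remark following the definition of $\mathcal{H}$, to each scalar component $F^k$ being holomorphic on $D\setminus K \subset \mathbb{C}^n$. Next, since $n \ge 2$, $K \subset D$ is compact, and $D\setminus K$ is connected, the classical Hartogs extension (Kugelsatz) applies to each $F^k$: there is $G^k \in \mathcal{H}(D,\mathbb{C})$ with $G^k|_{D\setminus K} = F^k$. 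Setting $G(z) := \sum_{k=0}^{3} G^k(z)e_k$ gives a holomorphic stem function $G \in \mathcal{H}(D,\mathbb{H}_{\mathbb{C}})$: it is complex intrinsic because each $G^k$ extends $F^k$, which satisfies $\widetilde{F}(\overline{z}) = \overline{\widetilde{F}(z)}$, and this identity propagates to the extension by the identity theorem on the connected set $D$; and it is holomorphic because each $G^k$ is. Finally, put $g := \mathcal{I}(G) \in \mathcal{SR}(\Omega_D,\mathbb{H})$. On $\Omega_{D\setminus K}$ we have $g = \mathcal{I}(G)|_{\Omega_{D\setminus K}} = \mathcal{I}(G|_{D\setminus K}) = \mathcal{I}(F) = f$, using that the lift map $\mathcal{I}$ commutes with restriction to a subdomain of $\mathbb{C}^n$, so $g|_{\Omega_{D\setminus K}} = f$ as required.

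The step that requires the most care — and the only genuine obstacle — is verifying that the extended stem function $G$ is again complex intrinsic, so that $\mathcal{I}(G)$ is well-defined. The concern is that $D \cup \mathrm{conj}(D)$ need not be connected in general, but the hypotheses guarantee $D$ itself is a domain; the complex intrinsic condition $G(\overline{z}) = \overline{G(z)}$ can be checked by noting that $z \mapsto \overline{G^k(\overline{z})}$ is holomorphic on $\overline{D} := \mathrm{conj}(D)$ and the combination needed agrees with $F^k$ on $D\setminus K$ where complex intrinsicity already holds; applying the classical Hartogs extension uniqueness (the extension $G^k$ is unique) then forces the intrinsic relation on all of $D$. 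Everything else is bookkeeping: the componentwise reduction of holomorphy, and the compatibility of $\mathcal{I}$ with restriction, are immediate from the definitions given in Sections 2–3.
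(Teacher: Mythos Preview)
Your proposal is correct and follows essentially the same approach as the paper's proof of the analogous octonionic Hartogs theorem (the quaternionic results in Section~7 are explicitly stated without proof, with the understanding that the arguments carry over verbatim from the octonionic case with $8$ replaced by $4$). You in fact add a point the paper's proof leaves implicit: the verification that the extended function $G$ is again complex intrinsic, so that $\mathcal{I}(G)$ is well-defined as a slice function; the paper simply asserts $G\in\mathcal{H}(D,\mathbb{O}_{\mathbb{C}})$ and moves on.
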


\section{Conclusions}

We initiate the study of   the theory of slice regular  functions of sevevral octonionic variables as well as  several quaternionic variables.
The related Bochner-Martinelli formula and the Hartogs theorem are established. It deserves to consider  further extensions of the classical theory of   several complex variables to these new settings.

\newpage

\nocite{*}
\bibliographystyle{plain}
\bibliography{mybibfile.bib}
\end{document}